\newtheorem{thrm}{Theorem}[section]
\newtheorem{lem}[thrm]{Lemma}
\newtheorem{pro}[thrm]{Proposition}
\theoremstyle{definition}
\newtheorem{remark}[thrm]{Remark}
\author[N. Demni]{Nizar Demni}
\address{IRMAR, Universit\'e de Rennes 1\\ Campus de
Beaulieu\\ 35042 Rennes cedex\\ France}
\email{nizar.demni@univ-rennes1.fr}
\date{\today}
\keywords{Free unitary Brownian motion;  alternating star cumulants; Free Jacobi process; Lagrange inversion formula; Laguerre polynomials; Verblunsky coefficients.} 
\title{Lagrange inversion formula, Laguerre polynomials and the free unitary Brownian motion} 
\begin{document}
\maketitle
\begin{abstract}
This paper is devoted to the computations of some relevant quantities associated with the free unitary Brownian motion. Using the Lagrange inversion formula, we first derive an explicit expression for its alternating star cumulants which have even lengths and relate them to those having odd lengths by means of a summation formula for the free cumulants with product as entries. Next, we use again Lagrange formula together with a generating series for Laguerre polynomials in order to compute the Taylor coefficients of the reciprocal of the $R$-transform of the free Jacobi process associated with a single projection of rank $1/2$, and those of its $S$-transform as well. This generating series lead also to the Taylor expansions of the Schur function of the spectral distribution of the free unitary Brownian motion and of its first iterate. 
\end{abstract}

\section{Introduction}
Free probability theory has its origin in harmonic analysis on free groups and the formulation of its basic concepts is due to D.V. Voiculescu in the eightees. In this framework, a $\star$-noncommutative probability space is a unital noncommutative von Neumann algebra $\mathscr{A}$ equipped with a faithful finite trace $\phi$ and an involution $\star$, and the freeness in Voiculescu's sense replaces the classical independence of commutative random variables. By the spectral Theorem, freeness in Voiculescu's sense leads to highly noncommutative convolutions of probability distributions by adding or multiplying self-adjoint or unitary random variables. Moreover,  analytic transforms analogous to the Fourier transform and linearizing these free convolutions exist and allow to compute the resulting probability distributions. Other probabilistic concepts such as infinite-divisibility, L\'evy processes, L\'evy-Khintchine representations and cumulants have also their free counterparts. For instance, the latter can be defined either analytically as the Taylor coefficients of the so-called $R$-transform which linearizes the free additive convolution or in a combinatorial fashion through the lattice of noncrossing partitions and its M\"obius function. We refer the reader to the monographs \cite{Hia-Pet}, \cite{NS}, \cite{Voi} and references therein for a good and large account on free probability theory and on its operator-algebraic and combinatorial aspects. 

In this paper, we are interested in solving three problems related to the so-called free unitary Brownian motion. Introduced in \cite{Bia}, this is a family of unitary operators $(u_t)_{t \geq 0}$ starting at the unit ${\bf 1}$ of $\mathcal{A}$ whose spectral distributions $(\nu_t)_{t \geq 0}$ form a semigroup with respect to the free multiplicative convolution of probability distribution on the unit circle $\mathbb{T}$. Moreover, $\nu_t$ is invariant under complex conjugation and its positive moments are given at any time $t > 0$ by (\cite{Bia}): 
\begin{equation}\label{mom}
\phi(u_t^k) = \int_{\mathbb{T}} z^k\nu_t(dz) = \frac{e^{-kt/2}}{k}L_{k-1}^{(1)}(kt), \quad k \geq 1,
\end{equation}
where $L_{k-1}^{(1)}$ is the $(k-1)$-th Laguerre polynomial of index $1$ (\cite{AAR}). In particular, $\nu_t$ converges weakly as $t \rightarrow \infty$ to the spectral distribution $\nu_{\infty}$ of a Haar unitary operator $u_{\infty}$ (\cite{Hia-Pet}, \cite{NS}). Besides, the $R$-transform of $\nu_t$ is expressed through the Lambert function so that the sequence of its free cumulants is given by (\cite{DGPN}, \cite{Levy}):
\begin{equation*}
e^{-nt/2}\frac{(-nt)^{n-1}}{n!}, \quad n \geq 1. 
\end{equation*}
More generally, the star cumulants (see below) of the free unitary Brownian motion were studied in \cite{DGPN} and their explicit expressions are only available in some particular cases. Of special interest are those corresponding to alternating words $(u_t, u_t^{\star}, u_t, u_t^{\star}, \dots)$ and having even lengths since they converge as $t \rightarrow \infty$ to the only non zero star cumulants of $u_{\infty}$ (\cite{NS}). The star cumulants of odd lengths are relevant as well since they encode the infinitesimal structure of $u_t$ near $t = \infty$ defined and completely determined in \cite{DGPN}. In that paper, it was shown that the generating function of the alternating star cumulants having even lengths satisfies a non linear partial differential equation (hereafter pde) which was then solved using the method of characteristics. The first problem we deal with here is concerned with the derivation of an explicit formula for these free cumulants and amounts to the computation of the Taylor coefficients of the local inverse around $z=1$ of the map encoding the characteristics of the pde. 

Another family of positive bounded operators -the free Jacobi process $(J_t)_{t \geq 0}$-is closely related to the free unitary Brownian motion. Actually, let $\{P, Q\} \in \mathscr{A}$ be two orthogonal projections which are free with 
$\{u_t, u_t^{\star}\}_{t \geq 0}$ (we assume without loss of generality that $\mathscr{A}$ is large enough to contain both free families of operators). Then, the free Jacobi process associated with $\{P,Q\}$ is defined by (\cite{Dem})
\begin{equation*}
J_t := Pu_tQu_t^{\star}P, \quad t \geq 0,
\end{equation*}
and is considered as an element of the compressed algebra $(P\mathscr{A}P, \phi/\phi(P))$. When $P=Q$ and $\phi(P) = 1/2$, it was proved in \cite{DHH} that the spectral distribution of $J_t$, say $\mu_t$, coincides with that of the self-adjoint random variable 
\begin{equation}\label{DescJac}
\frac{u_{2t} + u_{2t}^{\star} + 2{\bf 1}}{4}
\end{equation}
in $(\mathscr{A}, \phi)$. In particular, the multi-linearity of the free cumulant functional (\cite{NS}) and the freeness of ${\bf 1}$ with $\{u_{2t}, u_{2t}^{\star}\}$ show that any free cumulant of $J_t$ is the sum of star cumulants of $u_{2t}$ of different types. Since the latter are only known in few cases, it is most likely better to derive the free cumulants of $J_t$ by inverting its moment generating function already obtained in \cite{DHH}. In the same spirit, it is natural to seek an explicit expression for the $S$-transform of $\mu_t$. 

Another problem related to the free unitary Brownian motion which we tackle here is the determination of the Verblunsky coefficients of $\nu_t$, known also as the Schur parameters. The knowledge of these complex numbers would imply important properties enjoyed by $\nu_t$ as illustrated for instance by the strong Szeg\"o Theorem (\cite{Sim1}). They are defined as the coefficients of the continued fraction expansion of its Schur function or equivalently, by Geroniums Theorem (\cite{Sim1}), as those of the recurrence relation satisfied by the orthogonal polynomials with respect to $\nu_t$. Equivalently, the Schur algorithm shows that the Verblunsky coefficients may be realized as the constant terms of the Schur iterates. Moreover, they can be connected via the inverse Geronimus relations with the Jacobi-Szeg\"o parameters of the image of $\nu_t$ under the Szeg\"o map (\cite{Sim2}) which, up to affine transformations, coincide with the Jacobi-Szeg\"o parameters of $\mu_{t/2}$. 

In this paper, we solve the first problem related to the star cumulants of $u_t$ and make major steps toward the solutions to the two remaining ones. More precisely, we use Lagrange inversion formula in order to write down the expression of the alternating star cumulants having even lengths of $u_{t}$. Appealing to a summation formula due to Krawczyk and Speicher for the free cumulants with product as entries, we relate the star cumulants having odd lengths to those having even lengths opening therefore the way to compute inductively the former from the latter. Afterwards, we use again Lagrange formula together with a certain generating series for Laguerre polynomials in order to derive the Taylor expansion of the reciprocal of the $R$-transform of $\mu_t$. Similar computations lead also to the Taylor expansions of the $S$-transform of $\mu_t$, of the Schur function of $\nu_t$ and of its first iterate. 

For sake of completeness, we recall in the following section some needed facts from free probability theory as well as the definitions of various special functions occuring in the remainder of the paper. 

\section{Reminder: free probability theory and special functions} 
\subsection{Free probability theory}
Let $(\mathscr{A}, \phi)$ be a $\star$-noncommutative probability space and let $a \in \mathscr{A}$. Then, the $n$-th moment of $a$ is the complex number $\phi(a^n), n \geq 1$ and if $a$ is a (bounded) self-adjoint or a unitary operator, then $\phi(a^n)$ is the $n$-th moment of the spectral distribution of $a$ which is supported in a compact set of the real line $\mathbb{R}$ or in $\mathbb{T}$ respectively (\cite{NS}, p.43-44). Now, denote $NC(n)$ the set of noncrossing partitions and consider the reverse refinement order $\leq$: for $\pi, \rho \in NC(n)$, $\pi \leq \rho$ if and only if that every block of $\rho$ is a union of blocks of $\pi$. This is a partial order and the set $(NC(n), \leq )$ turns out to be a {\em lattice}, that is, every $\pi , \rho \in NC(n)$ 
have a smallest common upper bound $\pi \vee \rho$ and a greatest common lower bound $\pi \wedge \rho$ (\cite{NS}, p144). Moreover, the minimal and maximal elements of $(NC(n), \leq)$ are $0_n$ (the partition of $\{ 1, \ldots , n \}$ into $n$ blocks) and $1_n$ (the partition of $\{ 1, \ldots , n \}$ into one block), and its M\"obius function defined on $\{(\pi,\rho) \in NC(n), \pi \leq \rho\}$ is denoted `Mob' (\cite{NS}, Lecture X). 

Given a $n$-tuple $\{a_1, \dots, a_n\} \in \mathscr{A}$, their joint $n$-th cumulant is defined by (\cite{NS}, p.175-176) 
\begin{equation*}
\kappa_n(a_1,\dots,a_n) = \sum_{\pi \in NC(n)} \textrm{Mob}(\pi, 1_n)\prod_{V \in \pi}\phi(a_1,\dots, a_n)|V)
\end{equation*}
where, for a block $V$ of $\pi$,
\begin{equation*}
\phi(a_1,\dots, a_n)|V) := \phi(a_{i_1}\cdots a_{i_{|V|}}), \quad V = \{i_1 < i_2 \dots < i_{|V|}\} \in \pi.
\end{equation*}
In particular, $\kappa_n$ is a multilinear functional and if $a_1, \dots, a_n \in \{u_t, u_t^{\star}\}$, then we get the {\bf star cumulant} of $u_t$ of length $n$. Besides, the {\bf free cumulants} of $a$ is the sequence $\kappa_n(a), n \geq 1$ defined by: 
\begin{equation*}
\kappa_n(a) := \kappa_n(a,\dots,a) = \sum_{\pi \in NC(n)} \textrm{Mob}(\pi, 1_n)\prod_{V \in \pi}\phi(a^{|V|}),
\end{equation*}
where for a block $V \in \pi$, $|V|$ is its cardinality.  

We will also make use of the following result due to Krawczyk and Speicher (\cite{NS}, pp.178-181) which gives a structured summation 
formula for free cumulants with products as entries (\cite{NS}, p.180). More precisely, let $\sigma = \{ J_1, \ldots , J_k \} \in NC(n)$ be a partition where every block is an interval:
$J_1 = \{ 1, \ldots , j_1 \}, J_2 = \{ j_1 + 1, \ldots , j_2 \}, \ldots , J_k = \{ j_{k-1} +1, \ldots , j_k \}$ for some $1 \leq j_1 < j_2 < \cdots < j_k = n$.  Then for every $a_1, \ldots , a_n \in \mathscr{A}$, one has
\begin{equation}\label{PE}
\kappa_k \bigl( a_1 \cdots a_{j_1}, \ a_{j_1 +1} \cdots a_{j_2}, \, \ldots , \, a_{j_{k-1}+ 1} \cdots a_{j_k} \bigr) = \sum_{\substack{\pi \in NC(n)   \\ \pi \vee \sigma = 1_n}}
 \prod_{V \in \pi} \kappa_{|V|}\bigl( \, (a_1, \ldots , a_n) \mid V \, \bigr)
\end{equation}
where 
\begin{equation*}
\kappa_{|V|}\left(a_1, \ldots , a_n) \mid V \right) :=  \kappa_{|V|}(a_{i_1}, \dots, a_{i_{|V|}}), \quad V = \{i_1 < i_2 \dots < i_{|V|}\} \in \pi.
\end{equation*}
At the analytic side, recall that the $R$-transform of $a$ is the following free cumulant generating function:
\begin{equation*}
R(z) := \sum_{n \geq 1}\kappa_n(a) z^n.
\end{equation*}
Since $a$ is a bounded operator, then this series converges absolutely in a neighborhood of the origin. Moreover, it is related to the moment generating function 
\begin{equation*}
M(z) := 1+\sum_{n \geq 1}\phi(a^n)z^n 
\end{equation*}
by the following functional equation (\cite{NS}, p.269): 
\begin{equation}\label{FuncEq}
w(z) = z[1+R(w(z))]
\end{equation}
where $w(z) := zM(z)$. In other words, the map
\begin{equation*}
F : z \mapsto \frac{z}{1+R(z)}
\end{equation*}
is the compositional inverse of $w$ near $z=0$. Finally, assume further that $\phi(a) \neq 0$. Then the $S$-transform of $a$ is defined by (\cite{Vo})
\begin{equation}\label{FuncEq1}
zS(z) = (1+z)(M-1)^{-1}(z),
\end{equation}
and satisfies the inverse relation $R^{-1}(z) = zS(z)$.  

\subsection{Special functions}
Apart from the last two facts recalled at the end of this paragraph, the remaning ones are standard and may be found in the books \cite{AS} and \cite{AAR}. We start with the Gamma function 
\begin{equation*}
\Gamma(x) = \int_0^{\infty} e^{-u}u^{x-1} du, \quad x > 0, 
\end{equation*}
and the Pochhammer symbol 
\begin{equation*}
(a)_k = (a+k-1)\dots(a+1)a, \quad a \in \mathbb{R}, \, k \in \mathbb{N}, 
\end{equation*}
with the convention $(0)_k = \delta_{k0}$. The latter may be written as  
\begin{equation*}
(a)_k = \frac{\Gamma(a+k)}{\Gamma(a)}
\end{equation*} 
when $a > 0$, while
\begin{equation}\label{I1}
\frac{(-n)_k}{k!} = (-1)^k \binom{n}{k}
\end{equation}
if $k \leq n$ and vanishes otherwise. 
Next comes the generalized hypergeometric function defined by the series 
\begin{equation*}
{}_pF_q((a_i, 1 \leq i \leq p), (b_j, 1 \leq j \leq q); z) = \sum_{m \geq 0}\frac{\prod_{i=1}^p(a_i)_m}{\prod_{j=1}^q(b_j)_m}\frac{z^m}{m!}
\end{equation*}
where an empty product equals one and the parameters $(a_i, 1 \leq i \leq p)$ are reals while $(b_j, 1 \leq j \leq q) \in \mathbb{R} \setminus -\mathbb{N}$. 
With regard to \eqref{I1}, this series terminates when at least $a_i = -n \in - \mathbb{N}$ for some $1 \leq i \leq p$, therefore reduces in this case to a polynomial of degree $n$. In particular, the Charlier polynomials are defined by 
\begin{equation*}
C_n(x,a) := {}_2F_0\left(-n, -x; -\frac{1}{a}\right), \quad a \in \mathbb{R} \setminus \{0\}, x \in \mathbb{R}.   
\end{equation*}
When $x \in \mathbb{Z}$ is an integer, a generating function of these polynomials is given by
\begin{equation}\label{GFC}
\sum_{n \geq 0}C_n(x,a)\frac{(-au)^n}{n!} = e^{-au}\left(1+u\right)^x, \quad |u| < 1. 
\end{equation}
Moreover, the $n$-th Laguerre polynomial with index $\alpha \in \mathbb{R}$ is defined by 
\begin{equation}\label{DefL}
L_n^{(\alpha)}(z) := \frac{1}{n!}\sum_{j=0}^n\frac{(-n)_j}{j!}(\alpha+j+1)_{n-j}z^j, 
\end{equation}
and is related to the $n$-th Charlier polynomial via:
\begin{equation}\label{CL}
\frac{(-a)^n}{n!}C_n(x,a) = L_n^{(x-n)}(a). 
\end{equation}
It also obeys the following differentiation rule: 
\begin{equation}\label{DiffRule}
\frac{d^m}{dx^m}L_{k}^{(\alpha)}(x) = (-1)^mL_{k-m}^{(m+\alpha)}(x).
\end{equation}
We close this paragraph with the statement of Lagrange inversion formula which plays a key role in our subsequent computations as well as an instance of Brown's Theorem (see e.g. \cite{Man-Sri}, p.356-357).
\begin{itemize}
\item Let $z_0 \in \mathbb{C}$ and let $g$ be a holomorphic function in a neighborhood of $z_0$ with $g'(z_0) \neq 0$. Then, $g$ is locally invertible and its inverse function $g^{-1}$ admits the following expansion near $w_0 := g(z_0)$: 
\begin{equation*}
g^{-1}(w) = g^{-1}(w_0) + \sum_{n \geq 1} \frac{(w-w_0)^n}{n!} \left\{\partial_z^{n-1}\left[\frac{z-z_0}{g(z) - g(z_0)}\right]^n\right\}_{|z=z_0}.
\end{equation*}
\item Given a sequence $(r_n)_{n \geq 0}$, define the new one $(p_n)_{n \geq 0}$ by 
\begin{equation*}
p_n := \sum_{k=0}^n \binom{2n}{n-k} r_n. 
\end{equation*}
Then 
\begin{equation*}
\sum_{n \geq 0} p_n w^n = \frac{1+z}{1-z}\sum_{n \geq 0} r_n z^n 
\end{equation*}
whenever both series converge, where $w = z/(1+z)^2$. Equivalently, if \footnote{We take the principal determination of the square root.}
\begin{equation*}
\alpha(w) := \frac{1-\sqrt{1-w}}{1+\sqrt{1-w}}, \quad w \in \mathbb{C} \setminus [1,\infty[,
\end{equation*}
then $\alpha$ is invertible with inverse given by 
\begin{equation*}
w = \alpha^{-1}(z) = \frac{4z}{(1+z)^2}, \quad |z| < 1,
\end{equation*}
so that 
\begin{equation*}
\sum_{n \geq 0} p_n \frac{w^n}{4^n} = \frac{1+\alpha(w)}{1-\alpha(w)}\sum_{n \geq 0} r_n [\alpha(w)]^n = \frac{1}{\sqrt{1-w}}\sum_{n \geq 0} r_n [\alpha(w)]^n. 
\end{equation*}
\end{itemize}

\section{Alternating Star cumulants} 
For $n \geq 1$, let 
\begin{equation*}
g_n(t) := \kappa_{2n}(\underbrace{u_t, u_t^{\star}, \dots, u_t, u_t^{\star}}_{2n}),
\end{equation*} 
be the alternating star cumulant of $u_{2t}$ of even length $2n$ and recall from \cite{DGPN}, Theorem 5.1, that:
\begin{equation}\label{RR1}
-\frac{1}{n}\frac{d}{dt}g_n(t) = g_n(t) + \sum_{m=1}^{n-1}g_m(t)g_{n-m}(t), \quad n \geq 2,
\end{equation}
with the initial value $g_1(t) = 1-e^{-t}$. Equivalently, the generating series 
\begin{equation*}
g(t,z) = \frac{1}{2} + \sum_{n \geq 1}g_n(t)z^n
\end{equation*}
converging in a neighborhood of $z = 0$ satisfies the following non linear pde: 
\begin{equation*}
\partial_t g + 2zg\partial_zg = z, \quad g(0,z) = 1/2.  
\end{equation*}
Using the method of characteristics, the following non linear equation was obtained in \cite{DGPN}, Theorem 5.4:
\begin{equation*}
[g(t,\chi_t(z))]^2 = \chi_t(z) +\frac{z^2}{4}
\end{equation*}
in a neighborhood of $z=1$, where 
\begin{equation*}
\chi_t(z) := \frac{z^2(1-z^2)e^{tz}}{[(1+z) - (1-z)e^{tz}]^2}.
\end{equation*}
Since $\chi_t'(1) \neq 0$ then $\chi_t$ is locally invertible therefore,  
\begin{equation}\label{Charac}
[g(t,z)]^2 = z +\frac{[\chi_t^{-1}(z)]^2}{4}
\end{equation}
in a neighborhood of $z=0$. As a matter of fact, it suffices and remains to compute the Taylor coefficients of the local inverse $\chi_t^{-1}$ of $\chi_t$ (for fixed $t > 0$) in order to get the explicit expression of $g_n(t)$. To proceed, we use the Lagrange inversion formula and prove the following: 
\begin{pro}\label{Pro1}
For any $n \geq 1$, there exist polynomials $(P_k^{(n)})_{k \geq 0}$ such that the $n$-th Taylor coefficient $a_n(t)$ of $\chi_t^{-1}$ reads 
\begin{align*}
a_n(t) = 2\frac{(-1)^n}{n}\sum_{k=1}^{n}\binom{2n}{n-k}e^{-kt}P_{k-1}^{(n)}(t).
\end{align*}
Moreover, 
\begin{align*}
\frac{e^{-kt}}{n}P_{k-1}^{(n)}(t) = 2 \frac{kt^{2n}}{(2n)!} \int_0^{\infty} x^{2n-1} \phi(u_{2(t+x)}^k) dx. 
\end{align*}
\end{pro}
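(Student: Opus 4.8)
The plan is to apply the Lagrange inversion formula recalled above to $\chi_t$ at the point $z_0=1$. Since $\chi_t(1)=0$ and a short expansion near $z=1$ (using $1-z^2=(1-z)(1+z)$) gives $\chi_t(z)=-(z-1)e^t/2+O((z-1)^2)$, we have $\chi_t'(1)=-e^t/2\neq 0$, so $\chi_t$ is locally invertible with $\chi_t^{-1}(0)=1$ and
\begin{equation*}
a_n(t)=\frac{1}{n!}\left\{\partial_z^{n-1}\left[\frac{z-1}{\chi_t(z)}\right]^n\right\}_{z=1}.
\end{equation*}
First I would simplify the bracket: with $h(z):=(1+z)-(1-z)e^{tz}$ and $1-z^2=(1-z)(1+z)$ one gets $\dfrac{z-1}{\chi_t(z)}=-\dfrac{h(z)^2}{z^2(1+z)e^{tz}}$, hence
\begin{equation*}
\left[\frac{z-1}{\chi_t(z)}\right]^n=\frac{(-1)^n h(z)^{2n}}{z^{2n}(1+z)^n e^{ntz}}.
\end{equation*}

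Next I would substitute $z=1+s$ (so that $\partial_z^{n-1}|_{z=1}=(n-1)!\,[s^{n-1}]$, writing $[s^j]$ for the coefficient of $s^j$), note that $h(1+s)=(2+s)+s\,e^{t(1+s)}$, and expand $h(1+s)^{2n}$ by the binomial theorem. The term of index $j$ is $s^j$ times a power series in $s$, so only $j=0,\dots,n-1$ contribute to $[s^{n-1}]$; reindexing by $k:=n-j\in\{1,\dots,n\}$ gathers a factor $e^{-kt}$ and produces
\begin{equation*}
a_n(t)=\frac{(-1)^n}{n}\sum_{k=1}^n\binom{2n}{n-k}\,e^{-kt}\,[s^{k-1}]\!\left\{\frac{(2+s)^k e^{-kts}}{(1+s)^{2n}}\right\}.
\end{equation*}
This gives the first display once one sets $P_{k-1}^{(n)}(t):=\tfrac12\,[s^{k-1}]\{(2+s)^k e^{-kts}/(1+s)^{2n}\}$, which is a polynomial in $t$ of degree at most $k-1$ since the only $t$-dependent factor is $e^{-kts}=\sum_{m\ge 0}(-kt)^m s^m/m!$.

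For the integral representation I would insert the elementary identity $(1+s)^{-2n}=\dfrac{k^{2n}}{(2n-1)!}\displaystyle\int_0^\infty x^{2n-1}e^{-kx(1+s)}\,dx$, exchange the (finite) coefficient extraction with the integral, and compute $Q_k(u):=[s^{k-1}]\{(2+s)^k e^{-us}\}$. Expanding $(2+s)^k$ and $e^{-us}$, using $\binom{k}{k-1-m}=\binom{k}{m+1}$ together with the explicit Laguerre expansion \eqref{DefL} and \eqref{I1}, one identifies $Q_k(u)=2L_{k-1}^{(1)}(2u)$. Hence $P_{k-1}^{(n)}(t)=\dfrac{k^{2n}}{(2n-1)!}\displaystyle\int_0^\infty x^{2n-1}e^{-kx}L_{k-1}^{(1)}(2k(t+x))\,dx$, and substituting $L_{k-1}^{(1)}(2k(t+x))=k\,e^{k(t+x)}\,\phi(u_{2(t+x)}^k)$ from \eqref{mom} and gathering the constants via $n\,(2n-1)!=(2n)!/2$ yields the asserted relation between $P_{k-1}^{(n)}$ and the moments $\phi(u_{2(t+x)}^k)$.

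The step I expect to be the main obstacle is the bookkeeping: in the second paragraph, keeping track of exactly which binomial terms survive the extraction of $[s^{n-1}]$ and reindexing them by $k=n-j$ without sign or range errors; and in the last paragraph, choosing the form of the Laguerre polynomial that makes the identity $Q_k(u)=2L_{k-1}^{(1)}(2u)$ — with the correct constant, the extra factor $k$, and the argument scaling $2u$ — come out cleanly. The analytic points (absolute convergence of the relevant series near $0$, and the legitimacy of interchanging $\int_0^\infty$ with $[s^{k-1}]$) are routine, since everything involved is either a finite sum or an absolutely convergent Gamma integral.
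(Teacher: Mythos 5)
Your argument is correct and reaches the same final objects as the paper, but by a more self-contained route. The paper factorizes $\chi_t(z)=-\frac{z^2}{4}\alpha^{-1}(\xi_{2t}(z))$, imports the expansion of $(z-1)^n4^n/[\alpha^{-1}(\xi_{2t}(z))]^n$ from Proposition 3.1 of \cite{Demni}, and only afterwards resums into $L_{k-1}^{(1)}(2k(t+x))$ via the Gamma integral for $(2n)_m$, the differentiation rule \eqref{DiffRule} and Taylor's formula. You instead clear denominators in $(z-1)/\chi_t(z)$, substitute $z=1+s$, expand $\bigl((2+s)+se^{t(1+s)}\bigr)^{2n}$ by the binomial theorem, and identify the inner coefficient through the elementary identity $[s^{k-1}]\{(2+s)^ke^{-us}\}=2L_{k-1}^{(1)}(2u)$ combined with the Gamma-integral representation of $(1+s)^{-2n}$. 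This avoids the external reference and the Laguerre resummation; your bookkeeping (only $j\le n-1$ survives the extraction of $[s^{n-1}]$, reindex $k=n-j$) is accurate, and your generating-function definition of $P_{k-1}^{(n)}$ agrees with the paper's explicit sum $\sum_{m=0}^{k-1}(-2)^m\frac{(2n)_m}{m!}L_{k-1-m}^{(m+1)}(2kt)$ (check $k=2$: both give $2-4t-4n$).

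One point you should not have glossed over: what your computation actually yields is
\begin{equation*}
\frac{e^{-kt}}{n}P_{k-1}^{(n)}(t)=\frac{2\,k^{2n+1}}{(2n)!}\int_0^{\infty}x^{2n-1}\,\phi\bigl(u_{2(t+x)}^k\bigr)\,dx,
\end{equation*}
i.e.\ with $k^{2n}$ where the proposition displays $t^{2n}$, yet you assert without comment that this is ``the asserted relation.'' In fact your formula is the correct one and the $t^{2n}$ in the statement is a slip (it also appears in the last lines of the paper's proof, where the change of variables $x\mapsto kx$ should produce $k^{2n}$, and where a stray $x^{2n+m-1}$ survives): for $n=k=1$ one has $P_0^{(1)}\equiv 1$, so the left-hand side equals $e^{-t}$, while the right-hand side as printed would be $t^2e^{-t}$. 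So your proof establishes the corrected statement; the only criticism is that you should flag the mismatch with the printed formula rather than claim literal agreement.
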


\begin{proof}
According to Lagrange inversion formula, the $n$-th Taylor coefficient of $\chi_t^{-1}$ is given by 
\begin{equation*}
a_n(t) = \frac{1}{n!}\partial_z^{n-1}\left[\frac{z-1}{\chi_t(z)}\right]^n_{z=1}, \quad n \geq 1. 
\end{equation*}
Set 
\begin{equation*}
\xi_t(z) := \frac{z-1}{z+1}e^{tz/2},
\end{equation*} 
then 
\begin{equation*}
\chi_t(z) = -\frac{z^2}{4}\alpha^{-1}(\xi_{2t}(z))
\end{equation*}
so that we are led to the expansion of
\begin{equation*}
z \mapsto (-1)^n\frac{(z-1)^n}{z^{2n}} \frac{(1+\xi_{2t}(z))^{2n}}{[\xi_{2t}(z)]^n}
\end{equation*}
around $z=1$. To this end, we first use the generalized binomial theorem:
\begin{equation}\label{For1}
\frac{1}{z^{2n}} = \sum_{k \geq 0}(-1)^{k}\frac{(2n)_k}{k!}(z-1)^k.
\end{equation}
Next comes the expansion of 
\begin{equation*}
(z-1)^n\frac{4^n}{[\alpha^{-1}(z)]^n},
\end{equation*}
which can be read off from the proof of Proposition 3.1. in \cite{Demni}: 
\begin{equation}\label{For2}
\sum_{m \geq 0} \left\{\sum_{k = 0}^{m \wedge 2n} \binom{2n}{k}e^{(k-n)t}L_{m-k}^{(n-m)}(2(n-k)t)\right\}\frac{(z-1)^{m}}{2^{m-n}}.
\end{equation}
Gathering \eqref{For1} and \eqref{For2}, we get
\begin{align*}
a_n(t) = &\frac{(-1)^n}{n}\sum_{m=0}^{n-1}\frac{(-1)^{n-1-m}}{2^{m-n}}\frac{(2n)_{n-1-m}}{(n-1-m)!}\sum_{k = 0}^{m \wedge 2n} \binom{2n}{k}e^{(k-n)t}L_{m-k}^{(n-m)}(2(n-k)t)
\\& = -\frac{2^n}{n}\sum_{m=0}^{n-1}\frac{(-1)^{m}}{2^m}\frac{(2n)_{n-1-m}}{(n-1-m)!}\sum_{k = 0}^{m} \binom{2n}{k}e^{(k-n)t}L_{m-k}^{(n-m)}(2(n-k)t)
\\& =  -\frac{2^n}{n}\sum_{k=0}^{n-1}\binom{2n}{k}e^{-(n-k)t}\sum_{m = k}^{n-1}\frac{(-1)^{m}}{2^m}\frac{(2n)_{n-1-m}}{(n-1-m)!}L_{m-k}^{(n-m)}(2(n-k)t).
\end{align*}
Performing the index change $k \mapsto n-k$  followed by $m \mapsto n-1-m$, we end up with
\begin{align*}
a_n(t) &=  2\frac{(-1)^n}{n}\sum_{k=1}^{n}\binom{2n}{n-k}e^{-kt}\sum_{m = 0}^{k-1}(-2)^{m}\frac{(2n)_{m}}{m!}L_{k-1-m}^{(m+1)}(2kt)
\\& := 2\frac{(-1)^n}{n}\sum_{k=1}^{n}\binom{2n}{n-k}e^{-kt}P_{k-1}^{(n)}(t)
\end{align*}
where 
\begin{equation*}
P_{k-1}^{(n)}(t) := \sum_{m = 0}^{k-1}(-2)^{m}\frac{(2n)_{m}}{m!}L_{k-1-m}^{(m+1)}(2kt). 
\end{equation*}
Finally, the Gamma integral 
\begin{equation*}
(2n)_m = \frac{1}{\Gamma(2n)} \int_0^{\infty} e^{-x}x^{2n+m-1}dx, \quad n \geq 1, 
\end{equation*}
and the differentiation rule \eqref{DiffRule} lead to
\begin{align*}
P_{k-1}^{(n)}(t) &= \frac{1}{\Gamma(2n)} \int_0^{\infty} e^{-x}x^{2n-1} \sum_{m = 0}^{k-1}\frac{(2x)^m}{m!}(-1)^mL_{k-1-m}^{(m+1)}(2kt) dx  
\\& = \frac{1}{\Gamma(2n)} \int_0^{\infty} e^{-x}x^{2n-1} \sum_{m = 0}^{k-1}\frac{(2x)^m}{m!}\frac{d^mL_{k-1}^{(1)}}{d^mu}(2kt) dx
\\& = \frac{1}{\Gamma(2n)} \int_0^{\infty} e^{-x}x^{2n-1}L_{k-1}^{(1)}(2kt+2x) dx.
\end{align*}
As a result, for any $k, n \geq 1$,
\begin{align*}
\frac{e^{-kt}}{n}P_{k-1}^{(n)}(t) &= \frac{2}{(2n)!} \int_0^{\infty} e^{-(kt+x)}x^{2n+m-1} L_{k-1}^{(1)}(2kt+2x) dx 
\\& = 2\frac{t^{2n}}{(2n)!} \int_0^{\infty} x^{2n-1} e^{-k(t+x)}L_{k-1}^{(1)}(2k(t+x)) dx 
\\& = 2\frac{kt^{2n}}{(2n)!} \int_0^{\infty} x^{2n-1} \phi(u_{2(t+x)}^k) dx
\end{align*}
as desired. 
\end{proof}

%Using the integral representation 
%\begin{equation*}
%L_{k-1}^{(1)}(2kt+2x) = \frac{1}{2i\pi} \int_{\gamma} e^{-(2kt + 2x)z}\left(1+\frac{1}{z}\right)^k dz
%\end{equation*}
%for a small enough circle $\gamma$ around the origin, we get by Fubini Theorem
%\begin{align*}
%P_{k-1}^{(n)}(t) & = \frac{1}{2i\pi \Gamma(2n)} \int_{\gamma}e^{-2ktz} \left(1+\frac{1}{z}\right)^k \int_0^{\infty} e^{-(1+2z)x}x^{2n-1} dx dz
%\\& =\frac{1}{2i\pi} \int_{\gamma}e^{-2ktz} \left(1+\frac{1}{z}\right)^k \frac{1}{(1+2z)^{2n}} dz
%\end{align*}
\begin{remark}
Equating the Taylor coefficients of both sides of \eqref{Charac} yields:
\begin{equation}\label{EAC}
- \frac{1}{n}\frac{d}{dt}g_n(t) = \frac{1}{4}\left[2a_n(t) + \sum_{m=1}^{n-1}a_m(t)a_{n-m}(t)\right], \quad n \geq 2,
 \end{equation}
 which holds true for $n=1$ if the sum in the right hand side is considered as empty. Thus, \eqref{EAC} provides an explicit, yet complicated, expression of $g_n(t)$. 
\end{remark}

As to the alternating star cumulants of odd length $2n+1$:
\begin{equation*}
h_n(t) := \kappa_{2n+1}(\underbrace{u_t,u_t^{\star}, \dots, u_t, u_t^{\star}}_{2n}, u_t), \quad n \geq 1,
\end{equation*}
they can be derived inductively from $(g_n(t))_{n \geq 1}$ as follows.
\begin{pro}
Set $h_0(t) := \kappa_1(u_t) = e^{-t/2}$. Then, for any $n \geq1$, 
\begin{equation}\label{RR2}
\sum_{j=0}^{n-1}h_j(t) \cdot h_{n-j-1}(t) = \frac{1}{n} \frac{d}{dt}g_n(t).
\end{equation}
\end{pro}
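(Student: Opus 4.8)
The plan is to pass to generating functions. Set $G(t,z) := \sum_{n \geq 0} h_n(t) z^n$, a series converging near $z = 0$ (recall $h_0(t) = e^{-t/2}$). I claim that \eqref{RR2} is equivalent to the single identity
\[
[g(t,z)]^2 + z\,[G(t,z)]^2 = z + \frac{1}{4},
\]
valid as formal power series in $z$ for each fixed $t$. Indeed, the coefficient of $z^N$ ($N \geq 1$) in $z[G(t,z)]^2$ is precisely $\sum_{j=0}^{N-1} h_j(t) h_{N-1-j}(t)$, the left-hand side of \eqref{RR2}. On the other side, writing $H(t,z) := \sum_{n \geq 1} \frac{1}{n} g_n'(t) z^n$ one has $z\,\partial_z H = \partial_t g$, so the pde $\partial_t g + 2zg\,\partial_z g = z$ gives $\partial_z H = 1 - \partial_z(g^2)$, and integrating from $z=0$ (where $g(t,0) = 1/2$) yields $H(t,z) = z + \frac14 - [g(t,z)]^2$; hence the coefficient of $z^N$ in $z + \frac14 - [g(t,z)]^2$ is $\frac1N g_N'(t)$, the right-hand side of \eqref{RR2}. (The case $N=1$ is also checked directly from $g_1(t) = 1-e^{-t}$ and $h_0(t)^2 = e^{-t}$.) Using \eqref{Charac} to substitute $[g(t,z)]^2 = z + \frac14[\chi_t^{-1}(z)]^2$, the target identity becomes the companion of \eqref{Charac}
\[
z\,[G(t,z)]^2 = \frac{1}{4}\bigl(1 - [\chi_t^{-1}(z)]^2\bigr).
\]

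The second step is to prove this companion identity, which I would do by running, for the odd alternating cumulants, the argument that produced \eqref{RR1} and \eqref{Charac} for the even ones in \cite{DGPN}. Concretely: $u_t$ solves the free stochastic differential equation $du_t = i u_t\,dx_t - \frac12 u_t\,dt$, where $(x_t)_{t \geq 0}$ is a free additive Brownian motion whose increments are free from the past; differentiating $\kappa_{2n}(u_t, u_t^\star, \dots, u_t, u_t^\star)$ in $t$ via the free It\^o formula (the drift contributing $-n\,g_n(t)$ and each second-order term invoking the contraction rule $(dx_t)\,X\,(dx_t) = \phi(X)\,dt$), the alternating pattern together with the unitarity $u_t u_t^\star = {\bf 1}$ collapses the ``interior'' of every contracted pair to a scalar, so that each contraction splits the length-$2n$ alternating word into two alternating sub-words that both begin and end with $u_t$, of odd lengths $2j+1$ and $2(n-1-j)+1$. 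Gathering the terms and comparing with \eqref{RR1} gives \eqref{RR2}; equivalently, encoding the resulting differential recursion for $(h_n(t))_n$ into $G(t,z)$ produces a first-order pde which the method of characteristics solves with the same characteristic map, namely $\chi_t(z)\,[G(t,\chi_t(z))]^2 = \frac14(1-z^2)$, and substituting $z \mapsto \chi_t^{-1}(z)$ (legitimate since $\chi_t'(1) \neq 0$) yields the displayed companion identity; adding \eqref{Charac} closes the argument.

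The main obstacle is the combinatorial bookkeeping in this free It\^o computation: one must verify that the drift term combines with exactly the right family of contraction terms and that, thanks to $u_t u_t^\star = {\bf 1}$, no genuinely non-alternating sub-word ever survives, every ``interior'' telescoping to the identity and every ``exterior'' remaining alternating of odd length. An alternative way to phrase the target that may be more transparent is the symmetric identity $[g(t,z^2)+zG(t,z^2)]^2 + [g(t,z^2)-zG(t,z^2)]^2 = 2z^2 + \frac12$, i.e. that the even part in $z$ of the square of the full alternating $\star$-cumulant generating series $\frac12 + \sum_{\ell \geq 1}\kappa_\ell(u_t,u_t^\star,u_t,\dots)z^\ell$ equals $z^2 + \frac14$; combined with the telescoping $\phi(w) = \phi(u_t^{\,d})$ for an alternating word $w$ with $d = \#\{u_t\}-\#\{u_t^\star\}$ and the moment--cumulant formula, this recasts \eqref{RR2} as a cancellation among products of $\star$-cumulants of alternating words of total length $2n$, but establishing it still requires an input on the odd cumulants beyond \eqref{RR1} alone.
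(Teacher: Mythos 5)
Your first step is fine: encoding \eqref{RR2} as the single identity $[g(t,z)]^2 + z[G(t,z)]^2 = z + \tfrac14$, with $G(t,z)=\sum_{n\geq 0}h_n(t)z^n$, and the verification via $H(t,z)=\sum_{n\geq 1}\tfrac1n g_n'(t)z^n$ and the pde $\partial_tg+2zg\partial_zg=z$ is correct. But this is only a restatement; all the mathematical content of the proposition sits in your second step, and there you offer a plan rather than a proof. The assertion that a free It\^o differentiation of $\kappa_{2n}(u_t,u_t^\star,\dots,u_t,u_t^\star)$ produces exactly the odd-by-odd splitting $\sum_j h_jh_{n-1-j}$ is precisely the statement to be proved, and the sketch leaves the decisive points unverified: (i) the contraction rule $(dx_t)X(dx_t)=\phi(X)\,dt$ acts on traces of words, not on the multilinear cumulant functionals, so ``differentiating $\kappa_{2n}$ via the free It\^o formula'' requires passing through the moment--cumulant relations -- this is exactly the delicate part of the derivation of \eqref{RR1} in \cite{DGPN}, not a mechanical rule; (ii) even at the level of moments, second-order terms also contract two letters of the \emph{same} type (two $u_t$'s or two $u_t^\star$'s), producing even-length exterior words with signs, so ``every contraction splits the word into two odd alternating sub-words beginning and ending with $u_t$'' is not correct as stated and requires the very bookkeeping you defer; (iii) the same derivative $\tfrac1n g_n'$ is already known, by \eqref{RR1}, to equal $-(g_n+\sum_m g_mg_{n-m})$, a sum of even-by-even products, so extracting the odd-by-odd form is an additional nontrivial identity and cannot be obtained by simply ``re-running'' the computation that gave \eqref{RR1}. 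Your candidate companion identity $\chi_t(z)[G(t,\chi_t(z))]^2=\tfrac14(1-z^2)$ is indeed the right target, but nothing in the proposal establishes it; as you concede, the ``combinatorial bookkeeping'' is the main obstacle, and that obstacle \emph{is} the proposition.

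For comparison, the paper proves \eqref{RR2} in a few lines with no stochastic calculus and no pde for $G$: apply the Krawczyk--Speicher formula \eqref{PE} with $m=2n$ and $\sigma=\{1\},\dots,\{2n-2\},\{2n-1,2n\}$ to the alternating word, so that the left-hand side is $\kappa_{2n-1}(u_t,u_t^\star,\dots,u_t^\star,u_tu_t^\star)=\kappa_{2n-1}(\dots,{\bf 1})=0$; the partitions $\pi$ with $\pi\vee\sigma=1_{2n}$ are $1_{2n}$ and the two-block noncrossing partitions separating $2n-1$ from $2n$, whose restricted cumulants are read off as $2h_0(t)h_{n-1}(t)$, $\sum_{j}g_j(t)g_{n-j}(t)$ and $\sum_j h_j(t)h_{n-1-j}(t)$; substituting \eqref{RR1} then yields \eqref{RR2} at once. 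If you wish to keep your generating-function formulation, this product-formula computation is exactly the ``input on the odd cumulants beyond \eqref{RR1}'' that your own closing paragraph identifies as missing; supplying it (or an equivalent complete argument) is what your proposal still lacks.
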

\begin{proof}
The proposition holds true for $n=1$ by direct computations since $g_1(t) = 1-e^{-t}$. Now, let $m \geq 2$ and take $k = m-1$  and 
\begin{equation*}
\sigma = \{1\}, \dots, \{n-2\}, \{m-1,m\}
\end{equation*}
in \eqref{PE}. Then, the proof of Lemma 3.4 in \cite{DGPN} shows that the partitions $\pi \in NC(m)$ satisfying $\pi \vee \sigma = 1_m$ are exactly $1_m$ and those having only two blocks $V_1, V_2$ which in addition separate $m$ and $m-1$. Consequently,  
\begin{multline*}
\kappa_{m-1} (a_1, \ldots, a_{m-2}, a_{m-1} \cdot a_m) = \kappa_m (a_1, \ldots, a_{m-2}, a_{m-1} , a_m)
\\ + \sum_{\substack{\pi = \{ V_1, V_2 \} \in NC(m) \\  m \in V_1, m-1 \in V_2}}  \kappa_{|V_1|} ((a_i)_{i \in V_1})\cdot \kappa_{|V_2|} ((a_i)_{i \in V_2}).
\end{multline*}
But if $\pi \neq 1_m$ then either $V_1 = \{m\}$ and $V_2 = \{1, \dots, m-1\}$ or $V_2$ is nested inside $V_1$ since $\pi$ is noncrossing. In the latter case, 
\begin{equation*}
V_2 = \{j,\dots, m-1\}, \quad V_1 = \{1,\dots, j-1\} \cup \{m\}, \quad j \geq 2. 
\end{equation*}
In particular, if $m= 2n, n \geq 2$ and 
\begin{equation*}
a_{2i+1} = u_t, \quad 0 \leq i \leq n-1, \quad a_{2i} = u_t^{\star}, \quad 0 \leq i \leq n,
\end{equation*}
and since  
\begin{equation*}
\kappa_{2n-1} (u_t, u_t^{\star}, \dots, u_t^{\star}, {\bf 1}) = 0 
\end{equation*}
by freeness of $u_tu_t^{\star} = {\bf 1}$ with $\{u_t, u_t^{\star}\}$, then
\begin{align*}
- g_n(t) & = 2\kappa_1(u_t) h_{n-1}(t) + \sum_{j=1}^{n-1}g_j(t) \cdot g_{n-j}(t) + \sum_{j=1}^{n-2}h_j(t) \cdot h_{n-j-1}(t)
\\& = 2e^{-t/2} h_{n-1}(t) + \sum_{j=1}^{n-1}g_j(t) \cdot g_{n-j}(t) + \sum_{j=1}^{n-2}h_j(t) \cdot h_{n-j-1}(t).
\end{align*}
Using \eqref{RR1}, we are done.
\end{proof}

\section{R-transform of the free Jacobi process} 
Recall the definition of the free Jacobi process associated with a single projection $P$: 
\begin{equation*}
J_t = Pu_tPu_t^{\star}P, \quad t \geq 0,
\end{equation*}
and assume $\phi(P) = 1/2$. Then the description supplied in \eqref{DescJac}  of the spectral distribution of $J_t$ at a fixed time $t > 0$ shows that the free cumulants of $J_t$ in the compressed algebra coincide with 
\begin{equation*}
\kappa_n\left(\frac{u_{2t} + u_{2t}^{\star} + 2{\bf 1}}{4}\right), \quad n \geq 1.
\end{equation*}
By multi-linearity of the free cumulant functional and the freeness of ${\bf 1}$ with $\{u_{2t}, u_{2t}^{\star}$ (\cite{NS}), the latter may be written as a sum of star cumulants:
\begin{equation*}
\kappa_n\left(\frac{u_{2t} + u_{2t}^{\star} + 2{\bf 1}}{4}\right) = \frac{1}{4^n} \sum_{\epsilon_1, \dots, \epsilon_n \in \{1, \star\}} \kappa_n(u_t^{\epsilon_1}, \dots, u_t^{\epsilon_n}).
\end{equation*}
which, up to our best knowledge, are only known explicitly in the few cases dealt with in \cite{DGPN}. Nonetheless, we may seek an expression of the $R$-transform of $J_t$ relying on the functional equation \eqref{FuncEq}. To this end, recall from \cite{DHH} the moment generating function of $J_t$:
\begin{equation}\label{MomGenJac}
M_t(z) := \frac{1}{\phi(P)} \sum_{n \geq 0}\phi(J_t^n)z^n =  \frac{1}{\sqrt{1-z}}[1+2U_{\nu_{2t}}(\alpha(z))], \quad |z| < 1,
\end{equation}
where
\begin{equation}\label{MGFU}
U_{\nu_t}(z) := \sum_{k \geq 1}\phi(u_t^k) z^k  = \sum_{k \geq 1}\frac{e^{-kt/2}}{k}L_{k-1}^{(1)}(kt) z^k,
\end{equation}
Set $w_t(z) := zM_t(z)$, 
\begin{equation*}
R_t(z) := \sum_{n \geq 1}\kappa_n(J_t)z^n,
\end{equation*}
and 
\begin{equation*}
F_t(z) := \frac{z}{1+R_t(z)} =: \sum_{n \geq 1} b_n(t)z^n.
\end{equation*}
Then
\begin{pro}
For any $n \geq 1$, 
\begin{align*} 
b_n(t) = \frac{1}{n}\sum_{j=0}^{n-1} \frac{((1-n)/2)_{n-1-j}}{4^j(n-1-j)!} \sum_{k=0}^{j}Q_{k}^{(n)}(t) \binom{2j}{j-k}e^{-kt},
\end{align*}
where $Q_0^{(n)}(t) = 1$ and 
\begin{align}\label{IntRep1}
\frac{e^{-kt}}{n}Q_{k}^{(n)}(t) = (-2)\frac{t^{n+1}}{n!}\int_0^{\infty}x^{n} \phi(u_{2(t+x)}^k)dx, \quad k \geq 1. 
\end{align}
\end{pro}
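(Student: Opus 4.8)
The plan is to mimic the proof of Proposition \ref{Pro1}: apply Lagrange inversion to $w_t$ at $z=0$, then massage the resulting coefficient using the closed form \eqref{MomGenJac} of $M_t$ together with Brown's theorem (the second bullet in the special-functions subsection) and the Laguerre generating series. First I would write, by Lagrange inversion, $b_n(t) = \frac{1}{n!}\partial_z^{n-1}\bigl[z/w_t(z)\bigr]^n_{z=0} = \frac{1}{n!}\partial_z^{n-1}\bigl[1/M_t(z)\bigr]^n_{z=0}$, so everything reduces to expanding $[M_t(z)]^{-n}$ around the origin. Using \eqref{MomGenJac}, $M_t(z)^{-n} = (1-z)^{n/2}\,[1+2U_{\nu_{2t}}(\alpha(z))]^{-n}$. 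The factor $(1-z)^{n/2}$ contributes, via the generalized binomial theorem, the coefficients $((1-n)/2)_{j}/j!$ up to sign, matching the Pochhammer symbol $((1-n)/2)_{n-1-j}$ appearing in the statement after one reindexes $j \mapsto n-1-j$ in the outer sum.

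The second, more delicate factor is $[1+2U_{\nu_{2t}}(\alpha(z))]^{-n}$ expanded in powers of $z$. Here I would exploit that $\alpha(z) = \alpha(z)$ has $\alpha(0)=0$ and that $1+2U_{\nu_{2t}}$ is a function of $\alpha(z)$, so I first expand $[1+2U_{\nu_{2t}}(\zeta)]^{-n}$ in powers of $\zeta$ and then substitute $\zeta = \alpha(z)$, invoking Brown's theorem in the form $\sum r_j \zeta^j \mapsto$ the series in $z/(1+\sqrt{\cdot})$ — concretely, the identity $\sum_j p_j w^j/4^j = (1-w)^{-1/2}\sum_j r_j[\alpha(w)]^j$ with $p_j = \sum_k \binom{2j}{j-k} r_j$ gives exactly the inner sum $\sum_{k=0}^j Q_k^{(n)}(t)\binom{2j}{j-k}$ together with a stray $(1-z)^{-1/2}$ that combines with $(1-z)^{n/2}$ to produce $(1-z)^{(n-1)/2}$, whose coefficients are again Pochhammer symbols $((1-n)/2)_{\bullet}$. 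The coefficients $Q_k^{(n)}(t)$ are then, by construction, $1/n$ times $e^{kt}$ times the Taylor coefficients of $[1+2U_{\nu_{2t}}(\zeta)]^{-n}$ rescaled appropriately; in particular $Q_0^{(n)}(t)=1$ since the constant term of $[1+2U_{\nu_{2t}}]^{-n}$ is $1$.

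For the integral representation \eqref{IntRep1}, I would proceed exactly as in the final display of the proof of Proposition \ref{Pro1}: identify $Q_k^{(n)}(t)$ (for $k\ge 1$) as a sum $\sum_m (\text{something})_m \, (\cdot)^m \, (-1)^m L^{(m+1)}_{k-1-m}(2kt)/m!$ coming from the expansion of the $k$-th power of $U_{\nu_{2t}}$ via \eqref{MGFU} and \eqref{DefL}, then use the Gamma integral $(n+1)_m = \frac{1}{\Gamma(n+1)}\int_0^\infty e^{-x} x^{n+m}dx$ and the differentiation rule \eqref{DiffRule} to collapse the $m$-sum into $\sum_m \frac{(2x)^m}{m!}\frac{d^m}{du^m}L^{(1)}_{k-1}(2kt) = L^{(1)}_{k-1}(2kt+2x)$, then re-express $e^{-k(t+x)}L^{(1)}_{k-1}(2k(t+x))$ as $k\phi(u_{2(t+x)}^k)$ using \eqref{mom}. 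The scaling $x \mapsto tx$ then produces the stated $t^{n+1}/n!$ prefactor.

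The main obstacle I anticipate is bookkeeping: correctly tracking how the three sources of $(1-z)$-powers (the explicit $(1-z)^{n/2}$ from $M_t^{-n}$, the $(1-z)^{-1/2}$ that Brown's theorem inserts, and the binomial coefficients $((1-n)/2)_\bullet$) combine, and making sure the substitution $\zeta = \alpha(z)$ is applied to the right factor while the prefactor $(1-z)^{\text{power}}$ is treated as an ordinary power series in $z$, not in $\alpha(z)$. There is a genuine subtlety in that Brown's theorem as stated transforms $\sum r_j \zeta^j$ into a series in $w$ with $w=z/(1+z)^2$ (equivalently $\zeta=\alpha(w)$), so one must verify that the composition with the already-present $(1-z)^{n/2}$ does not interfere — this works precisely because $(1-z)^{n/2}(1-z)^{-1/2}=(1-z)^{(n-1)/2}$ remains a clean power series whose coefficients are the advertised Pochhammer symbols. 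Once that algebraic identification is pinned down, the rest is the same Laguerre/Gamma-integral manipulation already carried out above.
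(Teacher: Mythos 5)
Your skeleton coincides with the paper's proof: Lagrange inversion applied to the compositional inverse $F_t$ of $w_t$, giving $b_n(t)=\frac{1}{n!}\partial_z^{n-1}\left[1/M_t(z)\right]^n_{|z=0}$, the factorization $[M_t(z)]^{-n}=(1-z)^{n/2}[1+2U_{\nu_{2t}}(\alpha(z))]^{-n}$ from \eqref{MomGenJac}, Brown's theorem to return from the $\alpha$-variable to $z$ (with the stray $(1-z)^{-1/2}$ absorbed into $(1-z)^{(n-1)/2}$, whose coefficients supply the Pochhammer symbols), and finally the Gamma-integral plus \eqref{DiffRule} manipulation for \eqref{IntRep1}. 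All of this is exactly what the paper does, and your concern about combining the various $(1-z)$-powers is resolved in the way you indicate.

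The genuine gap is the step you treat as routine: obtaining the Taylor expansion of $[1+2U_{\nu_{2t}}(\zeta)]^{-n}$ in powers of $\zeta$ with coefficients $e^{-kt}Q_k^{(n)}(t)$ in which each $Q_k^{(n)}(t)$ is a \emph{finite sum of single Laguerre polynomials}, namely $Q_k^{(n)}(t)=\frac{1}{k}\sum_{m=1}^{k}\frac{(n)_m}{(m-1)!}(-2)^m L^{(m)}_{k-m}(2kt)$. Writing $[1+2U_{\nu_{2t}}]^{-n}=1+\sum_{m\ge 1}\frac{(n)_m}{m!}(-2)^m[U_{\nu_{2t}}]^m$ is immediate, but extracting the coefficient of $\zeta^k$ in $[U_{\nu_{2t}}]^m$ ``via \eqref{MGFU} and \eqref{DefL}'', as you propose, only produces the $m$-fold convolution $e^{-kt}\sum_{j_1+\cdots+j_m=k}\prod_{i}\frac{1}{j_i}L^{(1)}_{j_i-1}(2j_i t)$; nothing in \eqref{MGFU} or \eqref{DefL} collapses this to the single polynomial $\frac{m}{k}L^{(m)}_{k-m}(2kt)$. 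That collapse is exactly the paper's Lemma \ref{Lem1}, proved by specializing Cohen's Laguerre generating function \eqref{SerLag} and identifying the implicit variable $u$ with $U_{\nu_{2t}}/(1+U_{\nu_{2t}})$ via Biane's result that $\xi_{2t}$ is the compositional inverse of the Herglotz transform $1+2U_{\nu_{2t}}$ on $\Gamma_{2t}$; this is the heart of the argument, not bookkeeping. Without that closed form your last step also fails, since the collapse $\sum_{m=0}^{k-1}\frac{(2x)^m}{m!}(-1)^mL^{(m+1)}_{k-1-m}(2kt)=L^{(1)}_{k-1}(2kt+2x)$, and hence the appearance of $\phi(u^k_{2(t+x)})$ in \eqref{IntRep1}, requires precisely that single-Laguerre structure of $Q_k^{(n)}$. (A small slip besides: $Q_j^{(n)}(t)$ is $e^{jt}$ times the $j$-th Taylor coefficient of $[1+2U_{\nu_{2t}}]^{-n}$; the factor $1/n$ you attach to it is just the global Lagrange prefactor of $b_n$.)
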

\begin{proof}
From \eqref{FuncEq} and Lagrange inversion formula, it is readily seen that
\begin{equation*}
b_n(t) = \frac{1}{n!} \partial_z^{n-1} \frac{ (1-z)^{n/2}}{[1+2U_{2t}(\alpha(z))]^n}_{|z=0}.  
\end{equation*}
Since $U_{2t}(0) = 0$, then 
\begin{align*}
\frac{1}{[1+2U_{\nu_{2t}}(z)]^n} & = 1+ \sum_{m \geq 1}\frac{(n)_m}{m!}(-2)^m[U_{\nu_{2t}}(z)]^m 
 %= 1+ \sum_{j \geq 1} \frac{e^{-jt}}{j}z^j \sum_{m =1}^j m\frac{(n)_m}{m!}2^mL_{j-m}^{(m)}(2jt)
\end{align*}
for small enough $|z|$. 
Therefore we need to expand $[U_{\nu_{2t}}]^m, m \geq 1$ and this task is achieved in the following lemma:
\begin{lem}\label{Lem1}
For any $m \geq 1$ and any complex number $|z| < 1$, 
\begin{equation*}
[U_{\nu_{2t}}]^{m}(z) =  m\sum_{j \geq m}L_{j-m}^{(m)}(2jt)\frac{(e^{-t}z)^j}{j}. 
\end{equation*}
\end{lem}
\begin{proof}
This expansion is an instance of formula 1.3 in \cite{Coh} (see also \cite{Man-Sri}, p.378): substitute there 
\begin{equation*}
b = 0, \quad v= m+1, \quad a= \frac{1}{m+1}, \quad x=2(m+1)t,
\end{equation*}
to get   
\begin{equation}\label{SerLag}
\sum_{j \geq 0} \frac{1}{j+m} L_j^{(m)}(2(j+m)t)(e^{-t}z)^{j+m} = \frac{1}{m}\frac{(e^{-t}z)^{m}}{(1-u)^{m}}e^{2tmu/(u-1)}.
\end{equation}
In the right hand side of the last equality, $u = u(t,z)$ belongs to the open unit disc $\mathbb{D}$ and is implicitly defined by
\begin{equation*}
e^{-t}z = ue^{2tu/(1-u)} \quad \Leftrightarrow \quad z = ue^{t(1+u)/(1-u)}.
\end{equation*}
Set
\begin{equation*}
Z := \frac{u+1}{1-u},
\end{equation*}
then straightforward computations show that $\xi_{2t}(Z) = z, \Re(Z) \geq 0$. But $\xi_{2t}$ is a one-to-one map from the Jordan domain 
\begin{equation*}
\Gamma_{2t} := \{\Re(Z) > 0, \xi_{2t}(Z) \in \mathbb{D}\}
\end{equation*}
onto $\mathbb{D}$ whose compositional inverse is $1+2U_{\nu_{2t}}$ (\cite{Biane}, Lemma 12). Hence, $Z = 1+2U_{\nu_{2t}}(z)$ so that
\begin{equation*}
u = \frac{Z-1}{Z+1}
\end{equation*}
is uniquely determined in the open unit disc. Substituting 
\begin{eqnarray*}
u = (ze^{-t})e^{2ut/(u-1)} &=& \frac{U_{\nu_{2t}}(z)}{1+U_{\nu_{2t}}(z)} \\ 
\frac{1}{1-u} &=& 1+U_{\nu_{2t}}(z)
\end{eqnarray*}
in the right-hand side of \eqref{SerLag}, the lemma is proved. 
\end{proof}

From this lemma, It follows that 
\begin{align*}
\frac{1}{[1+2U_{\nu_{2t}}(z)]^n} %& = 1+ \sum_{m \geq 1}\frac{(n)_m}{m!}2^m[U_{2t}(z)]^m 
 = 1+ \sum_{j \geq 1} \frac{e^{-jt}}{j}z^j \sum_{m =1}^j \frac{(n)_m}{(m-1)!}(-2)^mL_{j-m}^{(m)}(2jt)
\end{align*}
for small enough $|z|$. Set $Q_0^{(n)}(t) = 1$ and 
\begin{align*}
Q_j^{(n)}(t) &:= \frac{1}{j}\sum_{m =1}^j\frac{(n)_m}{(m-1)!}(-2)^mL_{j-m}^{(m)}(2jt), \quad j \geq 1.
%\\& =  \frac{1}{j}\sum_{m =0}^{j-1}\frac{(n)_{m+1}}{m!}(-2)^{m+1}L_{j-1-m}^{(m+1)}(2jt).
\end{align*}
Then Brown's Theorem yields
\begin{equation*}
\frac{1}{[1+2U_{\nu_{2t}}(\alpha(z))]^n} = \frac{1}{\sqrt{1-z}}\sum_{m \geq 0} \left\{\sum_{j=0}^m\binom{2m}{m-j}e^{-jt}Q_j^{(n)}(t)\right\}\frac{z^m}{4^m}.
\end{equation*}
On the other hand, the generalized binomial Theorem yields
\begin{equation*}
(1-z)^{(n-1)/2} = \sum_{m \geq 0} \frac{((1-n)/2)_m}{m!} z^m, \quad |z| < 1,
\end{equation*}
therefore 
\begin{align*}
\frac{ (1-z)^{n/2}}{[1+2U_{\nu_{2t}}(\alpha(z))]^n} & = \sum_{m \geq 0} \left\{\sum_{j=0}^m \frac{((1-n)/2)_j}{4^{m-j}j!}\sum_{k=0}^{m-j}\binom{2m-2j}{m-j-k}e^{-kt}Q_k^{(n)}(t)\right\}z^m.
\end{align*}
Extracting the $(n-1)$-th term of this series, we get
\begin{align*}
b_n(t) & = \frac{1}{n}\sum_{j=0}^{n-1} \frac{((1-n)/2)_j}{4^{n-1-j}j!} \sum_{k=0}^{n-j-1}e^{-kt}Q_{k}^{(n)}(t) \binom{2n-2j - 2}{n-j-k-1}.
\end{align*}
Performing the index change $k \mapsto n-k-j-1$ for fixed $j$ followed by $j \mapsto n-1-j$ and $k \mapsto j-k$, we end up with
\begin{align*} 
b_n(t) & = \frac{1}{n}\sum_{j=0}^{n-1} \frac{((1-n)/2)_j}{4^{n-j-1}j!} \sum_{k=0}^{n-1-j}Q_{n-1-j-k}^{(n)}(t) \binom{2n-2j - 2}{k}e^{-(n-j-1-k)t}
\\& = \frac{1}{n}\sum_{j=0}^{n-1} \frac{((1-n)/2)_{n-j-1}}{4^j (n-1-j)!} \sum_{k=0}^{j}Q_{j-k}^{(n)}(t) \binom{2j}{k}e^{-(j-k)t}
\\& = \frac{1}{n}\sum_{j=0}^{n-1} \frac{((1-n)/2)_{n-1-j}}{4^j(n-1-j)!} \sum_{k=0}^{j}Q_{k}^{(n)}(t) \binom{2j}{j-k}e^{-kt}.
%\\& = \frac{1}{n}\sum_{k=0}^{n-1}Q_{k}^{(n)}(t) \sum_{j=k}^{n-1} \frac{(-n/2)_{n-1-j}}{(n-1-j)!} \binom{2j}{j-k}.
\end{align*}
Finally, the integral representation \eqref{IntRep1} follows from the same lines written at the end of the proof of proposition \ref{Pro1}: 
\begin{align*}
\frac{e^{-kt}}{n}Q_k^{(n)}(t) & =  -e^{-kt}\frac{2}{kn}\sum_{m =0}^{k-1}\frac{(n)_{m+1}}{m!}(-2)^mL_{k-m-1}^{(m+1)}(2kt)
\\& = -\frac{2}{kn!} \int_0^{\infty}x^ne^{-(kt+x)} \sum_{m =0}^{k-1}\frac{(-2x)^m}{m!}L_{k-m-1}^{(m+1)}(2kt) dx
\\& = -2\frac{t^{n+1}}{kn!} \int_0^{\infty}x^ne^{-k(t+x)}L_{k-1}^{(1)}(2k(t+x)) dx
\\& = -2 \frac{t^{n+1}}{n!} \int_0^{\infty}x^n \phi(u_{2(t+x)}^k) dx. 
\end{align*}
\end{proof}

%On the other hand, if $n$ is odd then 
%\begin{equation*}
%((1-n)/2)_{n-j-1} = \frac{((n-1)/2)!}{(j-(n-1)/2)!}, \quad (n-1)/2 \leq j \leq n-1,
%\end{equation*}
%and vanishes otherwise $(0 \leq j < (n-1)/2)$. Consequently, 
%\begin{align*} 
%b_n(t) = \frac{1}{n}\sum_{j=(n-1)/2}^{n-1} \frac{((1-n)/2)_{n-1-j}}{4^j(n-1-j)!} \sum_{k=0}^{j}Q_{k}^{(n)}(t) \binom{2j}{j-k}e^{-kt}.
%\end{align*}

\begin{remark}
As pointed out to the author by C. Dunkl, $b_n(t)$ can be expressed through the ${}_3F_2$-hypergeometric function (\cite{AAR}). To see this, interchange the summation order in the expression of $b_n(t)$: 
\begin{align*}
b_{n}(t) =\frac{1}{n}\sum_{k=0}^{n-1}Q_{k}^{(n)}(t) e^{-kt}\sum_{j=k}^{n-1}\frac{\left(\left(1-n\right)/2\right) _{n-1-j}}{4^{j}\left(n-1-j\right)!}\binom{2j}{j-k},
\end{align*}
then use the Legendre duplication formula (\cite{AAR}):
\begin{equation*}
\frac{1}{4^{j}}\binom{2j}{j-k}=\frac{1}{4^{j}}\frac{\left(  2j\right)!}{\left(  j-k\right)  !\left(  j+k\right)  !}
= \frac{j!\left(1/2\right)  _{j}}{\left(  j-k\right)  !\left(j+k\right)  !},
\end{equation*}
to get
\begin{align*}
\sum_{j=k}^{n-1}\frac{\left(\left(1-n\right)/2\right) _{n-1-j}}{4^{j}\left(n-1-j\right)!}\binom{2j}{j-k} & = \sum_{i=0}^{n-1-k}\frac{\left(  \left(  1-n\right)  /2\right)  _{i}}{i!}\frac{\left(  n-1-i\right)  !\left(1/2\right)_{n-1-i}}{\left(n-1-k-i\right)  !\left(  n-1+k-i\right)!}.
\end{align*}
Now, the relations 
\begin{equation*}
\left(a\right) _{n-1-i}=\left(-1\right)^{i}\frac{\left(a\right)_{n-1}}{\left(2-a-n\right)  _{i}}, \quad \left(N-i\right)!=\left(-1\right)^{i}\frac{N!}{\left(-N\right)  _{i}},
\end{equation*}
follow from the definition of the Pochhammer symbol. Consequently,
\begin{align*}
\sum_{j=k}^{n-1}\frac{\left(\left(1-n\right)/2\right) _{n-1-j}}{4^{j}\left(n-1-j\right)!}\binom{2j}{j-k} & = 
\frac{\left(n-1\right)!\left(1/2\right)_{n-1}}{\left(n-1-k\right)  !\left(  n-1+k\right)  !}\sum_{i=0}^{n-1-k}\frac{\left(\left(1-n\right)  /2\right)  _{i}}{i!}\frac{\left(1+k-n\right)  _{i}\left(1-k-n\right)  _{i}}{\left(  1-n\right)  _{i}\left((3/2)-n\right)_{i}}
\\& = \frac{1}{4^{n-1}}\binom{2n-2}{n-1-k}~_{3}F_{2}\left(\genfrac{}{}{0pt}{}{1-k-n,1+k-n, (1-n)/2}{1-n, (3/2)-n};1\right).
\end{align*}
In particular,
\begin{align*}
\sum_{j=0}^{n-1}\frac{\left(\left(1-n\right)/2\right) _{n-1-j}}{4^{j}\left(n-1-j\right)!}\binom{2j}{j} = \frac{1}{4^{n-1}}\binom{2n-2}{n-1}~_{2}F_{1}\left(\genfrac{}{}{0pt}{}{1-n, (1-n)/2}{(3/2)-n};1\right).
\end{align*}
\end{remark}

\begin{remark}
Set 
\begin{equation*}
s_k(t) := e^{kt}\phi(u_t^k) = \frac{1}{k}L_{k-1}^{(1)}(kt), \quad k \geq 1,
\end{equation*}
then Lemma \ref{Lem1} asserts that for any $j \geq m \geq 1$, 
\begin{equation}\label{DiffEq}
\sum_{j_1+\dots+j_m = j} s_{j_1}(t)\dots s_{j_m}(t) = \frac{1}{j}L_{j-m}^{(m)}(jt). 
\end{equation}
When $m=2$, this identity is equivalent to the known fact (see e.g. \cite{Rai})
\begin{equation}\label{DiffEq1}
-\frac{1}{k}\partial_ts_k(t) = \sum_{j=1}^{k-1}s_j(t)s_{j-k}(t), \, k \geq 2, \,\, s_1(t) = 1,
\end{equation}
since 
\begin{equation*}
-\frac{1}{k}\partial_ts_k(t)  = \frac{2}{k}L_{k-2}^{(2)}(2kt).  
\end{equation*}
More generally, \eqref{DiffEq} may be derived inductively from \eqref{DiffEq1} after differentiation with respect to $t$ and multiple index changes.  
\end{remark}

\section{S-transform of the free Jacobi process}
In the same way the $R$-transform linearizes the free additive convolution, the $S$-transform does so for the free multiplicative convolution of probability distributions on the unit circle and on the positive real line.  
According to \eqref{FuncEq1}, we need to compute the compositional inverse of $M_t - 1$ around $z=0$ which exists since $\phi(J_t) \neq 0$. To this end, we first need a lemma. 
\begin{lem}
Let $z$ be a complex number with $|z| < 1$ and $\Re(1+2z) > 0$. Then, for any $n \geq 1$ and $t > 0$, 
\begin{equation*}
G_{n,t}(z) := \frac{1}{(1+z)^n(1+e^{-t(1+2z)})^n} = \sum_{m \geq 0}H_m^{(n)}(t)z^m
\end{equation*}
where 
\begin{equation*}
H_m^{(n)}(t) = \frac{(-1)^m}{m!}\sum_{k=0}^m \binom{m}{k}(n)_{m-k}(2t)^k \frac{d^k}{dt^k} \frac{1}{(1+e^{-t})^n}.
\end{equation*}
\end{lem}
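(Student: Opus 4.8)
The plan is to extract the Taylor coefficients of $G_{n,t}(\cdot)$ at $z=0$ by factoring $G_{n,t}$ into two explicit pieces, expanding each as a power series in $z$, and forming their Cauchy product (equivalently, applying the Leibniz rule to the product). First I would justify the expansion: under the hypotheses $|z|<1$ and $\Re(1+2z)>0$ one has $1+z\neq0$ and, since $t>0$, $|e^{-t(1+2z)}| = e^{-t\Re(1+2z)}<1$, so $1+e^{-t(1+2z)}$ lies in the disc of radius $1$ centred at $1$ and in particular does not vanish; hence both $(1+z)^{-n}$ and $(1+e^{-t(1+2z)})^{-n}$ are holomorphic on a neighbourhood of $z=0$, and so is $G_{n,t}$, with $H_m^{(n)}(t) = \frac{1}{m!}\partial_z^m G_{n,t}(z)\big|_{z=0}$.

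Next I would expand the two factors separately. For the first, the generalized binomial theorem gives $(1+z)^{-n} = \sum_{j\geq0}\binom{-n}{j}z^j$, i.e. $\partial_z^{j}(1+z)^{-n}\big|_{z=0} = (-1)^{j}(n)_j$. For the second, I would set $\psi(s) := (1+e^{-s})^{-n}$, holomorphic near $s=t$, and note that the factor equals $\psi\bigl(t(1+2z)\bigr) = \psi(t+2tz)$; since $\partial_z^{k}\psi(t+2tz) = (2t)^{k}\psi^{(k)}(t+2tz)$, evaluation at $z=0$ gives that the $z^k$-coefficient of the second factor is $\frac{(2t)^k}{k!}\psi^{(k)}(t) = \frac{(2t)^k}{k!}\frac{d^k}{dt^k}\frac{1}{(1+e^{-t})^n}$. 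This single-variable chain rule is the only place the derivatives $\frac{d^k}{dt^k}(1+e^{-t})^{-n}$ enter.

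Finally, I would combine the two expansions via the Leibniz rule applied to $G_{n,t}(z) = (1+z)^{-n}\psi(t+2tz)$:
\begin{equation*}
\partial_z^m G_{n,t}(z)\big|_{z=0} = \sum_{k=0}^m \binom{m}{k}\Bigl[\partial_z^{m-k}(1+z)^{-n}\Bigr]_{z=0}\Bigl[\partial_z^{k}\psi(t+2tz)\Bigr]_{z=0} = \sum_{k=0}^m \binom{m}{k}(-1)^{m-k}(n)_{m-k}(2t)^k\frac{d^k}{dt^k}\frac{1}{(1+e^{-t})^n},
\end{equation*}
and dividing by $m!$ and collecting the overall sign yields the asserted closed form for $H_m^{(n)}(t)$. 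There is no genuine analytic difficulty here once the domain of validity is pinned down as above; the only step that needs care is this sign and index bookkeeping, and in particular the factor $(-1)^{m-k}(n)_{m-k}$ coming from the $(m-k)$-fold $z$-derivative of $(1+z)^{-n}$, which has to be reconciled with the sign displayed in the statement.
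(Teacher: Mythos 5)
Your argument is correct, and it takes a genuinely more elementary route than the paper's. You factor $G_{n,t}(z)=(1+z)^{-n}\psi(t+2tz)$ with $\psi(s)=(1+e^{-s})^{-n}$ and extract Taylor coefficients by the Leibniz rule, the only inputs being the binomial series and the one-variable chain rule $\partial_z^k\psi(t+2tz)=(2t)^k\psi^{(k)}(t+2tz)$. The paper instead expands $(1+e^{-t(1+2z)})^{-n}=\sum_{j\ge 0}\frac{(n)_j}{j!}(-1)^j e^{-tj(1+2z)}$, identifies the $z$-expansion of each $(1+z)^{-n}e^{-2tjz}$ through the Charlier generating function \eqref{GFC} and the Charlier--Laguerre relation \eqref{CL}, obtaining the intermediate Laguerre-series form $H_m^{(n)}(t)=\sum_{j\ge 0}\frac{(n)_j}{j!}(-e^{-t})^j L_m^{(-(n+m))}(2jt)$, and only then resums over $j$ to convert the powers $j^k$ into $t$-derivatives of $(1+e^{-t})^{-n}$. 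Your chain-rule step bypasses all of this special-function bookkeeping; what the paper's detour buys is the Laguerre-series expression for $H_m^{(n)}(t)$, which is not needed for the final formula.

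The sign you flag at the end is a real discrepancy, and it is the displayed formula, not your computation, that is off. Since $\frac{d^k}{dt^k}e^{-tj}=(-j)^k e^{-tj}$, one has $\sum_{j\ge 0}\frac{(n)_j}{j!}(-e^{-t})^j j^k=(-1)^k\frac{d^k}{dt^k}\frac{1}{(1+e^{-t})^n}$; the paper's last step drops this $(-1)^k$, so the stated coefficient should carry $(-2t)^k$ in place of $(2t)^k$, which is exactly what your Leibniz computation produces. A direct check at $n=m=1$ settles it: the coefficient of $z$ in $(1+z)^{-1}(1+e^{-t(1+2z)})^{-1}$ is $-\frac{1}{1+e^{-t}}+\frac{2te^{-t}}{(1+e^{-t})^2}$, matching your version and not the displayed one. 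So do not leave the sign ``to be reconciled'': assert and prove the corrected identity
\begin{equation*}
H_m^{(n)}(t)=\frac{(-1)^m}{m!}\sum_{k=0}^m\binom{m}{k}(n)_{m-k}(-2t)^k\frac{d^k}{dt^k}\frac{1}{(1+e^{-t})^n},
\end{equation*}
and note that the correction propagates to the quantities built from $H_m^{(n)}$ later on (the coefficients $V_j^{(n)}$ in the $S$-transform proposition).
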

\begin{proof}
Since $\Re(1+2z) > 0$ then $|e^{-t(1+2z)}| < 1$ so that 
\begin{equation*}
\frac{1}{(1+e^{-t(1+2z)})^n} = \sum_{j \geq 0}\frac{(n)_j}{j!}(-1)^je^{-tj(1+2z)}.
\end{equation*}
But then, \eqref{GFC} and \eqref{CL} entail 
\begin{equation*}
\frac{1}{(1+z)^n}e^{-2tjz} = \sum_{m \geq 0}C_m(-n, 2tj)\frac{(-2tjz)^m}{m!} = \sum_{m \geq 0} L_m^{-(n+m)}(2jt)z^m.
\end{equation*}
Consequently,
\begin{equation*}
H_m^{(n)}(t) = \sum_{j \geq 0}\frac{(n)_j}{j!}(-e^{-t})^jL_m^{-(n+m)}(2jt).
\end{equation*}
In order to get the expression displayed in the lemma, we use \eqref{DefL} to write 
\begin{align*}
L_m^{-(n+m)}(2jt) & = \frac{1}{m!}\sum_{k=0}^m (-1)^k\binom{m}{k}(-n-m+k+1)_{m-k}(2jt)^k
\\& = \frac{(-1)^m}{m!}\sum_{k=0}^m \binom{m}{k} (n)_{m-k}(2jt)^k
\end{align*}
whence
\begin{align*}
H_m^{(n)}(t) &= \frac{(-1)^m}{m!}\sum_{k=0}^m \binom{m}{k}(n)_{m-k}(2t)^k \sum_{j \geq 0} \frac{(n)_j}{j!} (-e^{-t})^j j^k
\\& = \frac{(-1)^m}{m!}\sum_{k=0}^m \binom{m}{k}(n)_{m-k}(2t)^k  \frac{d^k}{dt^k} \frac{1}{(1+e^{-t})^n}. 
%\\& = \frac{(-1)^m}{2^n m!}\sum_{k=0}^m \binom{m}{k}(n)_{m-k} t^kA_{k,n}(-1)
\end{align*}
\end{proof}

With the help of this lemma, we derive the following: 
\begin{pro}
Set 
\begin{eqnarray*}
V_0^{(n)}(t) &:=& 2^n H_0^{(n)}(t) = \frac{2^n}{(1+e^{-t})^n}, \\
V_j^{(n)}(t) &:=& 2^n \frac{(e^{-t}z)^j}{j} \sum_{m=1}^j mH_m^{(n)}(t) L_{j-m}^{(m)}(2jt), \quad j \geq 1,\\
d_j^{(n)} &: = & \sum_{k=0}^j \frac{(-n-1)_k}{k!}\frac{(2n+1)_{j-k}}{(j-k)!}, j \geq 0.
\end{eqnarray*}
Then the inverse function $(M_t -1)^{-1}$ admits the Taylor expansion 
\begin{equation*}
(M_t -1)^{-1}(z) := \sum_{n \geq 1}c_n(t) z^n
\end{equation*}
near the origin, where
\begin{equation*}
c_n(t) = \frac{1}{n4^{n-1}} \sum_{j=0}^{n-1} \binom{2n-2}{n-1-j} \sum_{k=0}^j d_{j-k}^{(n)}V_j^{(n)}(t).
\end{equation*}
Consequently, the $S$-transform of $\mu_t$, say $S_t, t > 0$, reads
\begin{equation*}
S_t(z) = c_1(t) + \sum_{n \geq 1}[c_{n+1}(t) + c_n(t)] z^n.
\end{equation*}
\end{pro}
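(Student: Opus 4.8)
The plan is to compute $(M_t-1)^{-1}$ with the Lagrange inversion formula and then reduce the extraction of its Taylor coefficients to ingredients already in place: the expansion of $G_{n,t}$ from the preceding lemma, the expansion of the powers of $U_{\nu_{2t}}$ from Lemma \ref{Lem1}, and Brown's Theorem to handle the composition with $\alpha$. Since $M_t(0)=1$ and $(M_t-1)'(0)$ equals the first moment of $\mu_t$, which by \eqref{DescJac} is $(1+e^{-t})/2\neq 0$, the compositional inverse exists near $z=0$ and Lagrange inversion yields
\[
c_n(t)=\frac{1}{n!}\,\partial_z^{n-1}\left[\frac{z}{M_t(z)-1}\right]^n_{\,|z=0}=\frac1n\,[z^{n-1}]\left[\frac{z}{M_t(z)-1}\right]^n,
\]
so everything comes down to expanding the $n$-th power of $z/(M_t(z)-1)$.

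The heart of the argument is the algebraic identity
\[
\left[\frac{z}{M_t(z)-1}\right]^n=\frac{1}{\sqrt{1-z}}\,\frac{2^n(1-\alpha(z))^{n+1}}{(1+\alpha(z))^{2n+1}}\,G_{n,t}\bigl(U_{\nu_{2t}}(\alpha(z))\bigr),
\]
with $G_{n,t}$ the function of the preceding lemma. To obtain it, write $\sqrt{1-z}\,(M_t(z)-1)=1+2U_{\nu_{2t}}(\alpha(z))-\sqrt{1-z}$ from \eqref{MomGenJac}, set $y=\alpha(z)$ so that $\sqrt{1-z}=(1-y)/(1+y)$ and $z=4y/(1+y)^2$, and use the relation $y=\tfrac{U}{1+U}e^{t(1+2U)}$ with $U:=U_{\nu_{2t}}(y)$ — the statement that $1+2U_{\nu_{2t}}$ inverts $\xi_{2t}$, already invoked in the proof of Lemma \ref{Lem1}. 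A short computation then collapses the exponentials: $1+2U-\sqrt{1-z}=2U(1+e^{t(1+2U)})/(1+y)$, whence $\tfrac{z}{M_t(z)-1}=\tfrac{2(1-y)}{(1+U)(1+e^{-t(1+2U)})(1+y)^2}$, and raising to the $n$-th power produces $G_{n,t}(U)=(1+U)^{-n}(1+e^{-t(1+2U)})^{-n}$ times $2^n(1-y)^n/(1+y)^{2n}$; the latter is then rewritten in the displayed form using $(1-y)/(1+y)=\sqrt{1-z}$ and $1+y=2/(1+\sqrt{1-z})$.

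With this identity at hand I would finish by three routine expansions and one application of Brown's Theorem. First, the preceding lemma gives $G_{n,t}(w)=\sum_m H_m^{(n)}(t)w^m$; substituting the expansion of $[U_{\nu_{2t}}]^m$ from Lemma \ref{Lem1} and interchanging the summations gives $2^nG_{n,t}(U_{\nu_{2t}}(y))=\sum_{j\ge 0}V_j^{(n)}(t)\,y^j$ with the $V_j^{(n)}$ of the statement, and $V_0^{(n)}(t)=2^nH_0^{(n)}(t)=2^n/(1+e^{-t})^n$ since $G_{n,t}(0)=(1+e^{-t})^{-n}$. Second, the generalized binomial theorem applied to $(1-y)^{n+1}$ and to $(1+y)^{-(2n+1)}$ together with a Cauchy product gives $(1-y)^{n+1}(1+y)^{-(2n+1)}=\sum_j d_j^{(n)}y^j$ with the $d_j^{(n)}$ of the statement (and $d_0^{(n)}=1$). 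A further Cauchy product of these two series expresses $\tfrac{2^n(1-\alpha(z))^{n+1}}{(1+\alpha(z))^{2n+1}}G_{n,t}(U_{\nu_{2t}}(\alpha(z)))$ as a power series in $\alpha(z)$; applying Brown's Theorem to $\tfrac{1}{\sqrt{1-z}}$ times this series and extracting $[z^{n-1}]$ brings in the weights $\binom{2n-2}{n-1-j}$, and after the same kind of index reshuffling carried out at the end of Proposition \ref{Pro1} one reaches the announced formula for $c_n(t)$. Finally, the $S$-transform follows at once: by \eqref{FuncEq1}, $zS_t(z)=(1+z)(M_t-1)^{-1}(z)=\sum_{n\ge1}c_n(t)z^n+\sum_{n\ge1}c_n(t)z^{n+1}$, and dividing by $z$ gives $S_t(z)=c_1(t)+\sum_{n\ge1}(c_{n+1}(t)+c_n(t))z^n$.

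I expect the main obstacle to be the algebraic identity itself, specifically recognizing that after the substitution $y=\alpha(z)$ the combination $1+2U_{\nu_{2t}}(\alpha(z))-\sqrt{1-z}$ simplifies precisely so that the factor $(1+U_{\nu_{2t}})^{-n}(1+e^{-t(1+2U_{\nu_{2t}})})^{-n}$ — that is, exactly $G_{n,t}\circ U_{\nu_{2t}}$ — is pulled out, the remaining factor being a function of $\alpha(z)$ to which Brown's Theorem applies; this is what dictates the precise choice of $G_{n,t}$ in the preceding lemma. Once that identity is in place, the rest is bookkeeping of binomial coefficients (including keeping track of the alternating signs coming from $(1+y)^{-(2n+1)}$) and index changes entirely parallel to the earlier propositions.
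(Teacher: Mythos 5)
Your proposal is correct and follows essentially the same route as the paper: Lagrange inversion for $(M_t-1)^{-1}$, the simplification of $z/(M_t(z)-1)$ via the inverse relation $\xi_{2t}\circ(1+2U_{\nu_{2t}})=\mathrm{id}$ so that $2^nG_{n,t}(U_{\nu_{2t}})$ appears, then the expansions from the preceding lemma and Lemma \ref{Lem1}, the binomial series for $(1-y)^{n+1}(1+y)^{-(2n+1)}$, Brown's Theorem, and \eqref{FuncEq1} for $S_t$. The only (immaterial) difference is that you substitute $y=\alpha(z)$ at the outset, whereas the paper first establishes the identity in the disc variable and then composes with $\alpha$.
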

\begin{proof} 
From \eqref{MomGenJac}, we can rewrite $M_t(z)$ as
\begin{equation*}
M_t(z) = \frac{1+\alpha(z)}{1-\alpha(z)} [1+U_{\nu_{2t}}(\alpha(z))].
\end{equation*}
It follows that 
\begin{align*}
c_n(t) &= \frac{1}{n!}\partial_z^{n-1}\left\{\frac{z(1-\alpha(z))}{(1+\alpha(z))(1+2U_{\nu_{2t}}(z)) -(1-\alpha(z))}\right\}^n{}_{|z=0}
\\& = \frac{1}{n!}\partial_z^{n-1}\left\{\frac{4\alpha(z)}{(1+\alpha(z))^2}\frac{(1-\alpha(z))}{(1+\alpha(z))(1+2U_{\nu_{2t}}(\alpha(z))) -(1-\alpha(z))}\right\}^n{}_{|z=0}.
\end{align*}
Moreover, by the virtue of Brown's Theorem, it suffices to expand 
\begin{align*}
\frac{(1-\alpha(z))^{n+1}}{(1+\alpha(z))^{2n+1}} \left\{\frac{4\alpha(z)}{(1+\alpha(z))(1+2U_{\nu_{2t}}(\alpha(z))) -(1-\alpha(z))}\right\}^n,
\end{align*}
and to this end, we consider 
\begin{equation*}
\frac{4z}{(1+z)(1+2U_{2t}(z)) -(1-z)}.
\end{equation*}
From the inverse relation $\xi_{2t}(1+2U_{\nu_{2t}}(z)) = z, |z| < 1$, it follows that 
\begin{align*}
U_{\nu_{2t}}(z) = z(1+U_{\nu_{2t}}(z))e^{-(1+2tU_{\nu_{2t}}(z))}
\end{align*}
whence
\begin{align*}
\frac{4z}{(1+z)(1+2U_{\nu_{2t}}(z)) -(1-z)} &= \frac{2z}{z +(1+z)U_{\nu_{2t}}(z)} 
\\ &= \frac{2}{1 +(1+z)(1+U_{\nu_{2t}}(z))e^{-t(1+2U_{\nu_{2t}}(z))}} 
\\& = \frac{2}{[1+U_{\nu_{2t}}(z)][1+ e^{-t(1+2U_{\nu_{2t}}(z))}]}.
\end{align*}
Now, the previous lemma and the fact that $\Re(1+2U_{\nu_{2t}}(z)) > 0$ for $|z| < 1$ (\cite{Biane}) show that for any $n \geq 1$ and small $|z|$,  
\begin{equation*}
\frac{1}{[1+U_{\nu_{2t}}(z)]^n[1+ e^{-t(1+2U_{\nu_{2t}}(z))}]^n} = G_{n,t}(U_{\nu_{2t}}(z)) = \sum_{m \geq 0}H_m^{(n)}(t)[U_{\nu_{2t}}(z)]^m,
\end{equation*}
which by Lemma \ref{Lem1} is further expanded as 
\begin{align*}
\frac{1}{[1+U_{\nu_{2t}}(z)]^n[1+ e^{-t(1+2U_{\nu_{2t}}(z))}]^n}  & = H_0^{(n)}(t) + \sum_{m \geq 1}mH_m^{(n)}(t)\sum_{j \geq m}L_{j-m}^{(m)}(2jt)\frac{(e^{-t}z)^j}{j}
\\& = \frac{1}{(1+e^{-t})^n} + \sum_{j \geq 1}\frac{(e^{-t}z)^j}{j}\sum_{m=1}^j mH_m^{(n)}(t)L_{j-m}^{(m)}(2jt)
\\& = \frac{1}{2^n}\sum_{j \geq 0}V_j^{(n)}(t) z^j.
\end{align*}
Moreover, 
\begin{equation*}
\frac{(1-z)^{n+1}}{(1+z)^{2n+1}} = \sum_{j \geq 0} \sum_{k=0}^j \frac{(-n-1)_k}{k!}\frac{(2n+1)_{j-k}}{(j-k)}!z^j = \sum_{j \geq 0} d_j^{(n)}z^j
\end{equation*}
so that 
\begin{equation*}
\frac{(1-z)^{n+1}}{(1+z)^{2n+1}}\left[\frac{4z}{(1+z)(1+2U_{2t}(z)) -(1-z)}\right]^n  = \sum_{j \geq 0}\sum_{k=0}^j d_{j-k}^{(n)}V_j^{(n)}(t)z^j.
\end{equation*}
Composing with the map $\alpha$, we get from Brown's Theorem
\begin{multline*}
\frac{(1-\alpha(z))^{n+1}}{(1+\alpha(z))^{2n+1}} \left\{\frac{4\alpha(z)}{(1+\alpha(z))(1+2U_{\nu_{2t}}(\alpha(z))) -(1-\alpha(z))}\right\}^n = \frac{1-\alpha(z)}{1+\alpha(z)}
\\ \sum_{m\geq 0}\frac{z^m}{4^m} \sum_{j=0}^m \binom{2m}{m-j} \sum_{k=0}^j d_{j-k}^{(n)}V_j^{(n)}(t),
\end{multline*}
or equivalently,
\begin{equation*}
\left\{\frac{z(1-\alpha(z))}{(1+\alpha(z))(1+2U_{\nu_{2t}}(z)) -(1-\alpha(z))}\right\}^n = \sum_{m\geq 0}\frac{z^m}{4^m} \sum_{j=0}^m \binom{2m}{m-j} \sum_{k=0}^j d_{j-k}^{(n)}V_j^{(n)}(t).
\end{equation*}
The expressions of $c_n(t)$ and of $S_t$ are now obvious and the proposition is proved.
\end{proof}
%\begin{remark}
%where $A_{k,n}$ is a polynomial of degree $k$ defined for any $n \geq 1, k\geq 0$ by
%\begin{equation*}
%\left[u\frac{d}{du}\right]^k \frac{1}{(1-u)^n} := \frac{A_{k,n}(u)}{(1-u)^{n+k}}. 
%\end{equation*}
%In particular, $A_{k,1}$ is closely related to the $k$-th Bernoulli polynomial and the expressions of $A_{k,n}$ for small $k$ are given by: 
%\begin{eqnarray*}
%A_{0,n}(u) &=& 1 \\
%A_{1,n}(u) &=& nu \\
%A_{2,n}(u) &=& nu(nu+1) \\ 
%A_{3,n}(u) &=& nu[nu^2+(3n+1)u+1]. 
%\end{eqnarray*} 
%\end{remark} 

\section{Schur function of $\nu_t$ and its first iterate} 
Given a probability distribution $\mu$ supported in the unit circle, its Schur function $f_{\mu}$ is defined in the open unit disc by (\cite{Sim1}, p.25):
\begin{equation*}
\frac{1+zf_{\mu}(z)}{1-zf_{\mu}(z)} = \int_{\mathbb{T}} \frac{w+z}{w-z} \mu(dw) := H_{\mu}(z).
\end{equation*}
Equivalently,
\begin{equation*}
zf_{\mu}(z) = \frac{H_{\mu}(z) -1}{H_{\mu}(z) + 1}. 
\end{equation*}
Since $H_{\mu}(0) = 1$ and is analytic in $\mathbb{D}$, then $f_{\mu}$ is analytic there and the Verblunsky coefficients $(\gamma_j)_{j \geq 0}$ of $\mu$ are defined by the following continued fraction (\cite{Sim1}, p.3): 
\begin{equation*}
f_{\mu}(z) = \gamma_0 + \frac{1-|\gamma_0|^2}{\overline{\gamma_0} + \displaystyle \frac{1}{z\gamma_1+ \displaystyle \frac{z(1-|\gamma_1|^2)}{\overline{\gamma_1} + \dots}}}.
\end{equation*}
In a practical way, the Schur algorithm allows to compute them recursively from the Schur iterates $(f_{j,\mu})_{j \geq 0}$ as follows (\cite{Sim1}): 
\begin{equation*}
f_{0,\mu} := f_{\mu}, \quad zf_{j+1,\mu}(z) = \frac{f_{j,\mu}(z) - \gamma_{j}}{1-\overline{\gamma_{j}}f_{j,\mu}(z)}, \quad \gamma_j = f_{j,\mu}(0).
\end{equation*}
When $\mu = \nu_t$ is the spectral distribution of $u_t$, its Herglotz transform $H_{\mu} = H_{\nu_t}$ is the inverse function of $\xi_t$ in the open unit disc (\cite{Biane}). As a result,
\begin{pro}
For any $t > 0$, the Schur function $f_{0,\nu_t}$ and its first iterate $f_{1,\nu_t}$ admit the following expansions:
\begin{eqnarray*}
f_{0,\nu_t}(z) & = &  e^{-t/2} -te^{-t/2} \sum_{j \geq 1}\frac{e^{-jt/2}}{j} L_{j-1}^{(1)}((j+1)t)z^j \\ 
f_{1,\nu_t}(z) %& = & te^{-t}\sum_{j \geq 0}\frac{e^{-jt/2}}{j+1} \sum_{k \geq 0}e^{-kt} \left[kL_{j}^{(1)}((j+k+1)t) - (k+1)L_{j}^{(1)}((j+k+2)t)\right]z^j \\
          & = & te^{-t}(1-e^t) \sum_{j \geq 0}\frac{e^{-jt/2}}{j+1} \left[\sum_{k \geq 1}ke^{-kt} L_{j}^{(1)}((j+k+1)t)\right]z^j,
\end{eqnarray*}
for $|z| < 1$.
\end{pro}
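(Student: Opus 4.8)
The plan is to avoid any further use of the Lagrange inversion formula and instead to reduce the statement to the identity $H_{\nu_t}=\xi_t^{-1}$ and to the expansions of the powers of $U_{\nu_t}$ furnished by Lemma \ref{Lem1}. First I would record the exact shape of the Schur function of $\nu_t$. Writing $H:=H_{\nu_t}(z)$, the relation $\xi_t(H)=z$ reads $\frac{H-1}{H+1}e^{tH/2}=z$, so that, combined with $zf_{0,\nu_t}(z)=(H-1)/(H+1)$, it gives $zf_{0,\nu_t}(z)=ze^{-tH/2}$, whence
\begin{equation*}
f_{0,\nu_t}(z)=e^{-tH_{\nu_t}(z)/2}=e^{-t/2}e^{-tU_{\nu_t}(z)},
\end{equation*}
where I used $H_{\nu_t}=1+2U_{\nu_t}$, itself a consequence of the invariance of $\nu_t$ under complex conjugation. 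In particular $\gamma_0=f_{0,\nu_t}(0)=e^{-t/2}$, a real number.

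Next I would expand $e^{-ktU_{\nu_t}(z)}$ for every integer $k\geq 0$ and $|z|$ small. Writing $e^{-ktU_{\nu_t}(z)}=1+\sum_{m\geq1}\frac{(-kt)^m}{m!}[U_{\nu_t}(z)]^m$, substituting the expansion of $[U_{\nu_t}]^m$ given by Lemma \ref{Lem1} applied with $t/2$ in place of $t$, namely $[U_{\nu_t}]^m(z)=m\sum_{j\geq m}L_{j-m}^{(m)}(jt)(e^{-t/2}z)^j/j$, and interchanging the sums over $m$ and $j$, one is left with the inner sum $\sum_{m=1}^{j}\frac{(-kt)^m}{(m-1)!}L_{j-m}^{(m)}(jt)$. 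This is handled verbatim as in the proof of Proposition \ref{Pro1}: one factors out $-kt$, rewrites $(-1)^{m'}L_{j-1-m'}^{(m'+1)}(jt)=\frac{d^{m'}}{du^{m'}}L_{j-1}^{(1)}(jt)$ via \eqref{DiffRule}, and recognises a terminating Taylor expansion of the polynomial $L_{j-1}^{(1)}$ about $jt$, obtaining
\begin{equation*}
e^{-ktU_{\nu_t}(z)}=1-kt\sum_{j\geq1}\frac{e^{-jt/2}}{j}L_{j-1}^{(1)}((j+k)t)\,z^j .
\end{equation*}
Taking $k=1$ and multiplying by $e^{-t/2}$ produces the stated expansion of $f_{0,\nu_t}$.

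For the first iterate I would start from $zf_{1,\nu_t}(z)=(f_{0,\nu_t}(z)-\gamma_0)/(1-\overline{\gamma_0}f_{0,\nu_t}(z))$ with $\gamma_0=e^{-t/2}\in\mathbb{R}$, which upon inserting $f_{0,\nu_t}=e^{-t/2}e^{-tU_{\nu_t}}$ becomes
\begin{equation*}
zf_{1,\nu_t}(z)=\frac{e^{-t/2}\bigl(e^{-tU_{\nu_t}(z)}-1\bigr)}{1-e^{-t}e^{-tU_{\nu_t}(z)}}.
\end{equation*}
Since $\Re(1+2U_{\nu_t}(z))>0$ for $|z|<1$ (\cite{Biane}), one has $|e^{-t}e^{-tU_{\nu_t}(z)}|<e^{-t/2}<1$, so the denominator may be expanded as a convergent geometric series; splitting $e^{-tU_{\nu_t}(z)}-1$ accordingly, shifting indices and collecting the $z$-independent contributions yields $zf_{1,\nu_t}(z)=e^{-t/2}(e^t-1)\sum_{k\geq1}e^{-kt}e^{-ktU_{\nu_t}(z)}-e^{-t/2}$. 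Substituting the expansion of $e^{-ktU_{\nu_t}(z)}$ found above, the constant terms cancel once more (this is exactly the requirement that $zf_{1,\nu_t}$ vanish at $z=0$), and dividing by $z$ and relabelling the summation index gives the announced formula for $f_{1,\nu_t}$.

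The calculations being otherwise routine, the points that require a little care are the justification of the interchanges of the double series in $j$ and $k$ — which follows from the absolute convergence of $U_{\nu_t}(z)=\sum_{k\geq1}\phi(u_t^k)z^k$ for $|z|<1$, together with $|\phi(u_t^k)|\leq1$ and the entirety of $\zeta\mapsto e^{-t\zeta}$ — and the observation that, the Schur parameter $\gamma_0=e^{-t/2}$ being real, no complex conjugation survives in the compression formula for $f_{1,\nu_t}$. I expect this last bookkeeping, rather than any analytic difficulty, to be where one must be most attentive.
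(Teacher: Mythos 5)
Your argument is correct and follows essentially the same route as the paper: the identity $f_{0,\nu_t}=e^{-t/2}e^{-tU_{\nu_t}}$ obtained from $H_{\nu_t}=\xi_t^{-1}$, the expansion of $e^{-ktU_{\nu_t}}$ via Lemma \ref{Lem1} (with $t/2$ in place of $t$) combined with the differentiation rule \eqref{DiffRule} and Taylor's formula, and the Schur-algorithm step with a geometric expansion of the denominator; your shifting of the geometric index before expanding is merely a reorganisation of the paper's telescoping of $e^{-(k+1)tU_{\nu_t}}-e^{-ktU_{\nu_t}}$. The only points to note are that the paper justifies the interchange of the sums over $j$ and $k$ by the explicit estimate \eqref{Bound} rather than your looser appeal to $|\phi(u_t^k)|\le 1$ (which can be made precise, e.g.\ since $\Re\,U_{\nu_t}>-1/2$ gives $e^{-kt}|e^{-ktU_{\nu_t}(z)}|\le e^{-kt/2}$), and that your final prefactor $te^{-t}(1-e^{t})$ agrees with the statement, the factor $(1-e^{-t})$ appearing in the last display of the paper's proof being a typo.
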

\begin{proof}
Note first that both expansions are absolutely convergent in the open unit disc due to the following estimate (\cite{AS}, 22.14.13, p.786):
\begin{equation}\label{Bound}
|L_j^{(1)}(x)| \leq (j+1)e^{x/2}, \quad x \geq 0. 
\end{equation}
Now, using the expression  
\begin{equation*}
\xi_t(z) = \frac{z-1}{z+1}e^{tz/2}, \quad z \in \Gamma_{2t},
\end{equation*}
we readily get
\begin{equation*}
zf_{0,\nu_t}(z) = \frac{H_{\nu_t}(z) -1}{H_{\nu_t}(z) + 1} = ze^{-tH_{\nu_t}(z)/2}.
\end{equation*}
But $\nu_t$ is invariant under complex conjugation $z \mapsto \overline{z}$ so that $H_{\nu_t} = 1+2U_{\nu_t}$ where we recall that $U_{\nu_t}$ was previously defined in \eqref{MGFU}. Consequently,
\begin{equation*}
f_{0,\nu_t}(z) = e^{-tH_t(z)/2} = e^{-t/2}e^{-tU_{\nu_t}} = e^{-t/2}\sum_{m \geq 0} \frac{(-t)^m}{m!}[U_{\nu_t}]^m.
\end{equation*}
From lemma \ref{Lem1}, we readily derive 
\begin{align*}
f_{0,\nu_t}(z) &= e^{-t/2}+ e^{-t/2}\sum_{m \geq 1} \frac{(-t)^m}{m!}m\sum_{j \geq m}L_{j-m}^{(m)}(jt)\frac{(e^{-t/2}z)^j}{j}
\\& = e^{-t/2} + e^{-t/2}\sum_{m \geq 0} \frac{(-t)^{m+1}}{m!}\sum_{j \geq m+1}L_{j-m-1}^{(m+1)}(jt)\frac{(e^{-t/2}z)^j}{j}  
\\& = e^{-t/2} -t e^{-t/2} \sum_{j \geq 1}\frac{e^{-jt/2}}{j} \left\{\sum_{m=0}^{j-1}\frac{(-t)^{m}}{m!}L_{j-m-1}^{(m+1)}(jt)\right\} z^j
\\& = e^{-t/2} -te^{-t/2} \sum_{j \geq 1}\frac{e^{-jt/2}}{j} L_{j-1}^{(1)}((j+1)t)z^j,
\end{align*}
where the last equality follows from Taylor's formula. As a result, $\gamma_0(t) = e^{-t/2}$ and as such, the first Schur iterate reads 
\begin{equation*}
f_{1,\nu_t}(z) = e^{-t/2}\frac{e^{-tU_{\nu_t}} - 1}{z} \sum_{k \geq 0}e^{-kt} e^{-ktU_{\nu_t}(z)} = e^{-t/2} \sum_{k \geq 0}e^{-kt} \frac{e^{-(k+1)tU_{\nu_t}(z)} - e^{-ktU_{\nu_t}}}{z}.
\end{equation*}
But similar computations as above yield 
\begin{equation*}
e^{-ktU_{\nu_t}(z)} = 1-kt \sum_{j \geq 1}\frac{e^{-jt/2}}{j} L_{j-1}^{(1)}((j+k)t)z^j, \quad k \geq 0,
\end{equation*}
whence
\begin{align*}
e^{-t/2}\frac{e^{-(k+1)tU_{\nu_t}(z)} - e^{-ktU_{\nu_t}}}{z} & = t \sum_{j \geq 1}\frac{e^{-jt/2}}{j} \left[kL_{j-1}^{(1)}((j+k)t) - (k+1)L_{j-1}^{(1)}((j+k+1)t)\right]z^{j-1}
\\& = te^{-t} \sum_{j \geq 0}\frac{e^{-jt/2}}{j+1} \left[kL_{j}^{(1)}((j+k+1)t) - (k+1)L_{j}^{(1)}((j+k+2)t)\right]z^{j}.
\end{align*}
Finally, the estimate \eqref{Bound} shows that for any $t > 0$, the double series 
\begin{align*}
\sum_{k,j \geq 0}e^{-kt} \frac{e^{-jt/2}}{j+1}(k+m-1)L_{j}^{(1)}((j+k+m)t) z^{j}, \quad m \in \{1,2\},
\end{align*}
converges absolutely for $|z| < 1$ so that Fubini Theorem applies and yields
\begin{align*}
f_{1,\nu_t}(z) &= e^{-t/2}\sum_{k \geq 0} \frac{e^{-tU_{\nu_t}} - 1}{z} \sum_{k \geq 0}e^{-kt} e^{-ktU_{\nu_t}(z)} 
\\& = te^{-t} \sum_{j \geq 0} \frac{e^{-jt/2}}{j+1} \left\{\sum_{k \geq 0}e^{-kt} kL_{j}^{(1)}((j+k+1)t) - \sum_{k \geq 0}e^{-kt} (k+1)L_{j}^{(1)}((j+k+2)t)\right\}z^{j}
\\& =  te^{-t}(1-e^{-t}) \sum_{j \geq 0} \frac{e^{-jt/2}}{j+1} \sum_{k \geq 1}ke^{-kt} L_{j}^{(1)}((j+k+1)t)z^j.
\end{align*}
\end{proof}

\begin{remark}
From the two expansions derived above, we can compute the low-orders Verblunsky coefficients of $\nu_t$:
\begin{eqnarray*}
\gamma_1(t) & = & -\frac{te^{-t}}{1-e^{-t}},  \\ 
\gamma_2(t) & = & \frac{te^{-3t/2}[3t-2 + (2-t)e^{-t}]}{2[1-2e^{-t}+(1-t^2)e^{-2t}]}. 
\end{eqnarray*}
However, we do not succeed to get a closed formula for all of them (the computation of $\gamma_3(t)$ is already tedious). On the other hand, recall that $(\gamma_j(t))_{j \geq 0}$ are connected to the Jacobi-Szego parameters $(a_n)_n, (b_n)_n$ of the spectral distribution of $u_t + u_t^{\star}$ through the inverse Geronimus relations (\cite{Sim2}, p.968). Recall also that both sequences $(a_n)_n, (b_n)_n$ encode the J-continued fraction expansion of the Cauchy-Stieltjes transform of the spectral distribution of $u_t+u_t^{\star}$ (\cite{AAR}, \cite{Sim1}). With regard to \eqref{DescJac}, they are affine transformations of the Jacobi-Szeg\"o parameters of the spectral distribution $\mu_{t/2}$ of $J_{t/2}$. 
\end{remark}

\end{document}